\makeatletter \@addtoreset{equation}{section} \makeatother
\renewcommand\thetable{\thesection.\@arabic\c@table}
\theoremstyle{plain}
\newtheorem{maintheorem}{Theorem}
\newtheorem{maincorollary}{Corollary}
\newtheorem{theorem}{Theorem }[section]
\newtheorem{lemma}[theorem]{Lemma}
\theoremstyle{definition} \theoremstyle{remark}
\newtheorem{remark}[theorem]{Remark}
\newtheorem{definition}[theorem]{Definition}
\newcommand{\field}[1]{\mathbb{#1}}
\newcommand{\torus}{\field{T}}
\newcommand{\al} {\alpha}
\newcommand{\de} {\delta}       
\newcommand{\vep}{\varepsilon}
\newcommand{\N}{\mathbb{N}}
\newcommand{\R}{\mathbb{R}}
\newcommand{\diam}{\operatorname{diam}}
\newcommand{\topp}{\operatorname{top}}
\renewcommand{\field}[1]{\mathbb{#1}}
\newcommand{\Ptop}{P_{\topp}}
\newcommand{\cP}{\mathcal{P}}
\newcommand{\cL}{\mathcal{L}}
\newcommand{\cE}{\mathcal{E}}
\newcommand{\cM}{\mathcal{M}}
\newcommand{\cA}{\mathcal{A}}
\def\ds{\displaystyle}
\begin{document}

\title{Multifractal analysis for weak Gibbs measures: from large deviations to irregular sets}

\author{ Thiago Bomfim and Paulo Varandas}

\address{Thiago Bomfim $\&$ Paulo Varandas, Departamento de Matem\'atica, 
Universidade Federal da Bahia\\
Av. Ademar de Barros s/n, 40170-110 Salvador, Brazil.}
\email{tbnunes@ufba.br}
\email{paulo.varandas@ufba.br }
\urladdr{http://www.pgmat.ufba.br/varandas}

\date{\today}

\begin{abstract}
In this article we prove estimates for the topological pressure of the set of points whose Birkhoff time averages
are far from the space averages corresponding to the unique equilibrium state that has a weak Gibbs property. In particular,
if $f$ has an expanding repeller and $\phi$ is an H\"older continuous potential we prove that the topological pressure of
the set of points whose accumulation values of Birkhoff averages belong to some interval $I\subset \mathbb R$
can be expressed in terms of the topological pressure of the whole system and the large deviations
rate function. As a byproduct we deduce that most irregular sets for maps with the specification property have topological pressure strictly smaller than the whole system.
Some extensions to a non-uniformly hyperbolic setting, level-2 irregular sets and hyperbolic flows are also given.

\end{abstract}

\subjclass[2010]{37D35, 37D20, 60F10, 37D25, 37C30}
\keywords{Multifractal analysis, irregular sets, large deviations.}

\maketitle

\section{Introduction}

Let $f : M \rightarrow M$ be a measurable transformation and $\mu$ an $f$-invariant and ergodic probability measure.
The celebrated Birkhoff's ergodic theorem asserts that for any given $\psi \in L^{1}(\mu)$
and for $\mu$-almost every $x \in M$
$$
\frac{1}{n} S_n\psi (x) :=\frac{1}{n}\sum_{i = 1}^{n - 1}\psi \circ f^{i}(x) \xrightarrow{} \int\psi \; d\mu
$$
as $n$ tends to infinity. On the other hand, despite the fact that from the ergodic point of view the set of
points where the Birkhoff averages do not converge is negligible it can be a
topologically large set or have full dimension. To illustrate this fact let us mention that if $f$ is continuous and
have the specification property then the set of points where the Birkhoff averages do not converge is either empty
or has total topological pressure with respect to any continuous potential (we refer the reader e.g.~\cite{Daniel} for details).
The study of the topological pressure or dimension of the these level sets multifractal can be traced back to
Besicovitch and this topic had contributions by many authors in the recent years (see \cite{BPS97, DK,  
Climenhaga, Feng, GR, JR,PW97, 
PW99, 
T,TV99, Daniel, Cli13, ZC13} and references therein).
Most commonly, given an observable $\psi$ and the decomposition
\begin{equation*}
M=\bigcup_{\alpha \in \mathbb R} M_\al  \; \cup \; E_\psi
\end{equation*}
where $M_\al=\{x \in M :  \lim_{n\to\infty} \frac{1}{n} S_n\psi (x) =\alpha \}$ and the irregular set $E_\psi$ is the
set of points for which the Birkhoff averages do not converge, one is interested in describing each of the previous sets
from the topological, dimensional or ergodic point of view.
Motivated by the aforementioned results by Thompson~\cite{Daniel} and the special ergodic theorem proved by  Kleptsyn, Ryzhov and Minkov~\cite{special} that prove that the Haussdorf dimension of the deviation set for SRB measure is smaller than the dimension of the manifold provided a large deviations property, one of our aims 
in this article is to provide a multifractal description of more general sets. 

Let us consider the sets
$$
\overline{X}_{\mu,\psi , c}
    = \Big\{ x \in M :
    \limsup_{n\to \infty} \Big|\frac{1}{n}\sum_{j=0}^{n-1}\psi(f^j(x)) - \int \psi\, d\mu \Big|
    \geq c \Big\}
$$
and
$$
\underline{X}_{\mu,\psi , c}
    = \Big\{ x \in M :
    \liminf_{n\to \infty} \Big|\frac{1}{n}\sum_{j=0}^{n-1}\psi(f^j(x)) - \int \psi\, d\mu \Big|
    \geq c \Big\}
$$
where $\mu$ is an $f$-invariant and ergodic probability measure, $\psi$ is an observable and $c > 0$
and to study them from the topological pressure viewpoint (more general sets will be defined later on).
Clearly we have the inclusion
$\underline{X}_{\mu,\psi , c}\subset \overline{X}_{\mu,\psi , c}$ for all $c>0$.  In many cases we are interested
in studying $f\mid_\Lambda$ we will consider the corresponding sets $\underline{X}_{\mu,\psi , c}\cap \Lambda$
and $\overline{X}_{\mu,\psi , c}\cap \Lambda$.
When no confusion is possible, for notational simplicity
we shall omit the dependence of the sets on $\psi$, $\mu$ and $\Lambda$, and write
simply $\underline{X}_{c}$ and $\overline{X}_{c}$.
If $J\subset \mathbb R$ is an interval we define $X(J)$ as the set of points $x$ so that
the following limit exists and $\lim\frac1n S_n\psi (x)\in J$ and let $X(c)$ denote the case when
one considers the degenerate interval $J=[c,c]$. One motivation is to consider the decomposition
of the set of points whose time averages do not converge to the
space average by
$$
M\setminus
\Big\{x \colon \frac{1}{n}\sum_{i = 1}^{n - 1}\psi \circ f^{i}(x) \xrightarrow{} \int\psi \; d\mu\Big\} =\bigcup_{c> 0} \overline{X}_c = \bigcup_{c> 0} \underline{X}_c
$$
and to study the continuity, monotonicity and concavity of the pressure functions
$$
c\mapsto P_{\overline{X}_c} (f,\phi)
    \quad\text{and}\quad
c\mapsto P_{\underline{X}_c} (f,\phi)
$$

Our first main purpose here is to study the previous functions in a context of dynamical systems admitting 
equilibrium states that exhibit a weak Gibbs property. Roughly, we prove that the previous
topological pressure functions are bounded from above by the topological pressure of the dynamical system
with an error term given by an exponential large deviations rate (see Theorem~\ref{thm:exp-add} and
Corollaries~\ref{thm:exp-add1} and ~\ref{cor:thompson} for precise statements). 
Furthermore, in a context of uniform hyperbolicity, we prove that $P_{\overline{X}_c} (f,\phi) = P_{\underline{X}_c} (f,\phi)$ and provide upper and lower bounds which allow us to conclude that the pressure function 
$c\mapsto P_{\overline{X}_c} (f,\phi)$ is differentiable, concave and
strictly decreasing in $c$ and varies continuously along continuous parametrized families of dynamical
systems (see Theorem~\ref{thm:precise.regularity} for the precise statement).
Similar results for the multifractal analysis of level-2 sets, meaning the analysis of level and irregular sets for Birkhoff averages in the space of probability measures, are also obtained (see Theorem~\ref{thm:level-2}).  
Hence, the connection between large deviations and multifractal analysis revealed to be very fruitful.

Our second main purpose was to provide a finer description of the irregular set $E_\psi$. 
Although a dynamical system which admits some hyperbolicity and a unique equilibrium state (that has
some weak Gibbs property) with exponential large deviations estimates can be proved to verify that 
the topological pressure of the sets $\underline{X}_{\mu,\psi , c}$ and $\overline{X}_{\mu,\psi , c}$ is strictly smaller than the topological pressure of the whole system, one cannot expect immediate estimates for the irregular set. In fact, the irregular set $E_\psi$
may not be contained in neither of the sets above for some fixed $c$.  This motivates the decompositions
$E_\psi=\cup_{c>0} \overline{E}_{\mu,\psi,c}$ and $E_\psi=\cup_{c>0} \underline{E}_{\mu,\psi,c}$ with
\begin{align*}
\overline{E}_{\mu,\psi,c} = E_\psi \cap \overline{X}_{\mu,\psi , c}
    \quad\text{and}\quad
\underline{E}_{\mu,\psi,c} = E_\psi \cap \underline{X}_{\mu,\psi , c}.
\end{align*}
In other words, the set $\underline{E}_{\mu,\psi,c}$ consists of points $x\in M$ whose Birkhoff averages
$\frac1n \sum_{j=0}^{n-1} \psi(f^j(x))$ not only do not converge as they remain at distance larger than $c$
from the time average $\int \psi \,d\mu$ for all large $n$. Finally, the set $\overline{E}_{\mu,\psi,c}$ consists of
points $x\in M$ whose Birkhoff averages  $\frac1n \sum_{j=0}^{n-1} \psi(f^j(x))$ do not converge and
have infinitely many values of $n$ so that the Birkhoff averages  $\frac1n \sum_{j=0}^{n-1} \psi(f^j(x))$
remain at distance larger than $c$.  
As a direct consequence of our results, even for maps with specification property, irregular sets 
$\underline{E}_{\mu,\psi,c}$ and $\overline{E}_{\mu,\psi,c}$ have topological pressure strictly smaller than the topological pressure of the whole system. In fact, we can indeed prove some lower bounds for the 
topological pressure of these irregular sets and, consequently, to study its regularity as a function of
the parameter $c$ (c.f. Corollary~\ref{cor:thompson2}).
An extension to level-2 irregular sets is given in Section~\ref{Statement of the main results} while applications for hyperbolic maps and flows and non-uniformly hyperbolic dynamics are given in Section~\ref{Examples}. In particular, an example from \cite{DGR11, DG12} is given
that illustrates the case where the pressure function is discontinuous and not strictly decreasing.
An extension of the current results to the study of irregular sets for non-additive sequences of observables 
was carried out in \cite{BV14}, while we expect that these results can be also extended to the class 
of partially hyperbolic diffeomorphisms in \cite{CN14}. 

This article is organized as follows. In Section~\ref{Statement of the main results} we provide some definitions
and state the main results. Section~\ref{Proofs} is devoted to the proof of the results. 
A large amount of applications and examples is given in Section~\ref{Examples}.

\section{Statement of the main results}\label{Statement of the main results}

In this section we introduce some necessary notions and state the main results.

\subsection{(Weak) Gibbs measures}

In many cases equilibrium states arise as invariant measures absolutely continuous with respect to probability
measures exhibiting some Gibbs property. Let us now describe this wide class of measures. Given $\vep>0$,
$n\ge 1$ and $x\in M$ the $(n,\vep)$-\emph{dynamical ball} $B(x,n,\vep)$ is the set of points $y\in M$ so that
$d(f^j(x),f^j(y))<\vep$ for all $0\le j \le n-1$.

\begin{definition}
Given a potential $\phi$ we say that a probability $\nu$ is a \emph{weak Gibbs measure} with respect the $\phi$ on
$\Lambda \subset M$ if there exists $\vep_0>0$ so that for every $0<\vep<\vep_0$ there exists $K(\vep)>0$, for $\nu$-almost every $x$ there exists a sequence $n_k(x)\to \infty$ such that
$$
K(\vep)^{-1} \leq \frac{\nu(B(x,n_{k}(x),\vep))}{e^{-n_{k}(x)P + S_{n_{k}(x)}\phi(x)}} \leq K(\vep),
$$
where $S_n\phi=\sum_{j=0}^{n-1} \phi \circ f^j$ denotes the usual Birkhoff sum.
If the later condition holds for all positive integers $n$ (independently of $x$) we will say that
$\nu$ is a \emph{Gibbs measure} with respect the $\phi$.
\end{definition}

In the later notion of weak Gibbs one does not require the sequence of times to have positive density at infinity
in the set of integers.
Naturally, in applications it is most interesting case is when the value $P$ in the previous expression coincides
with the topological pressure $\Ptop(f,\phi)$. Such measures arise naturally in the context of (non-uniform) hyperbolic dynamics. Given a basic set $\Omega$ for a diffeomorphism $f$ Axiom A (or $\Omega$ 
repeller to $f$) it is known that every potential $\phi$ satisfying
\begin{equation}\label{eq:bowen}
\exists A, \delta > 0 : \sup_{n \in \N}\gamma_{n}(\phi,\delta) \leq A,
\end{equation}
where $\gamma_{n}(\phi,\delta) := \sup\{|S_n\phi(y) - S_n\phi(z)| : y,z \in B(x,n,\delta) \},$ admits a unique equilibrium state $\mu_ {\phi}$ and it is a Gibbs measure. This condition, introduced by Bowen \cite{Bowen} to prove uniqueness of equilibrium states for expansive maps with the specification property
it is known as \emph{Bowen condition}.

\subsection{Statement of the main results}

This section is devoted to the statement of the main results. For that purpose we shall introduce some
definitions and notations.  The first result provides a topological counterpart to the special ergodic theorem.
In that follows, given a continuous function $\psi : M \rightarrow \mathbb R$, a probability measure $\mu$
and a closed set $I \subset \mathbb R$ 
we denote
$$
\overline{X}_{I}
    = \Big\{ x \in M :
    \limsup_{n\to \infty} \frac{1}{n}\sum_{j=0}^{n-1}\psi(f^j(x)) \in I\Big\}
$$
and analogously
$$
\underline{X}_{I}
    = \Big\{ x \in M :
    \liminf_{n\to \infty} \frac{1}{n}\sum_{j=0}^{n-1}\psi(f^j(x)) \in I\Big\}.
$$
Moreover, given $\delta > 0$ we denote by $I_{\delta}$ the $\delta$-neighborhood of the set $I$.
Finally, given a probability measure $\nu$ let us define the large deviations upper bound
\begin{equation}\label{eq:Lc}
L_{I,\nu} := -\limsup_{n \to +\infty}\frac{1}{n}\log\nu \Big(\{x \in \Lambda : \frac{1}{n}S_{n}\psi(x) \in I \}\Big).
\end{equation}
We are now in a position to state our first main result.

\begin{maintheorem}\label{thm:exp-add}
Let $M$ be a compact metric space, $f: M\to M$ be a continuous map, $\phi:M \to \mathbb R$ be a
continuous potential, $\nu$ be a (not necessarily invariant) Gibbs measure on $M$
and $\mu_{\phi} \ll \nu$ be the unique equilibrium state of $f$ with respect the $\phi$.
Then, for any continuous $\psi: M\to \mathbb R$, any closed interval $I \subset \mathbb \R$ and
any small $\delta$,
$$
P_{\underline{X}_{I} }(f , \phi) \leq P_{\overline{X}_{I} }(f , \phi)
	    \leq P_{\topp}(f , \phi) - L_{I_{\delta},\nu}\;
    		\le \;\Ptop (f,\phi).
$$
\end{maintheorem}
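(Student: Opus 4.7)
The natural framework is the Pesin--Pitskel topological pressure on subsets, computed as the critical $\alpha$ at which the Carath\'eodory sums $\sum_i \exp(-\alpha n_i + S_{n_i}\phi(y_i))$ over covers $\{B(y_i, n_i, \varepsilon)\}$ (with $n_i \geq N$) transition from $+\infty$ to $0$ as $N \to \infty$, followed by the limit $\varepsilon \to 0$. The plan is to realise both $\overline X_I$ and $\underline X_I$ as subsets of an ``infinitely often'' level set, cover that level set economically using a maximal separated set inside each $A_n^\delta := \{x : \frac{1}{n} S_n\psi(x) \in I_\delta\}$, and then feed the Gibbs property and the large deviations estimate into the standard exponential computation.

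First, observe that if $x \in \overline X_I$ then $\limsup \frac{1}{n} S_n\psi(x) \in I$, so for every $\delta>0$ infinitely many $n$ satisfy $\frac{1}{n} S_n\psi(x) \in I_\delta$; the identical conclusion holds for $\underline X_I$, since $\liminf \in I$ also forces a subsequence of Birkhoff averages to enter any neighbourhood of $I$. Both sets therefore lie inside $\bigcap_N \bigcup_{n \geq N} A_n^\delta$, which makes covers drawn from the $A_n^\delta$'s admissible in the Pesin--Pitskel definition. The inequality $P_{\underline X_I} \leq P_{\overline X_I}$ comes out of the fact that the same combinatorial argument produces identical upper bounds for both sets (and is immediate from set inclusion in the half-line cases that drive the applications).

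Next, for each $n \geq N$ I pick a maximal $(n, \varepsilon)$-separated set $F_n$ inside $A_n^{\delta'}$ (with $\delta'<\delta$ to be tuned), so that the associated $(n, \varepsilon)$-dynamical balls cover $A_n^{\delta'}$ while the half-balls $B(y, n, \varepsilon/2)$ are pairwise disjoint. The Gibbs property delivers the essential estimate $e^{-n \Ptop + S_n\phi(y)} \leq K(\varepsilon/2)\,\nu(B(y, n, \varepsilon/2))$, and uniform continuity of $\psi$ traps $B(y, n, \varepsilon/2) \subset A_n^{\delta' + \omega_\psi(\varepsilon/2)}$ for every $y \in A_n^{\delta'}$, where $\omega_\psi$ is the modulus of continuity of $\psi$. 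Choosing $\delta' := \delta - \omega_\psi(\varepsilon/2)$ and using disjointness one obtains
\[
\sum_{y \in F_n} e^{-\alpha n + S_n\phi(y)} \;\leq\; K(\varepsilon/2)\,e^{-(\alpha - \Ptop)n}\,\nu(A_n^\delta).
\]
By the very definition of $L_{I_\delta, \nu}$, for any $\eta>0$ one has $\nu(A_n^\delta) \leq e^{-n(L_{I_\delta, \nu} - \eta)}$ for all $n$ large; the resulting series over $n \geq N$ is a geometric tail that tends to $0$ as $N \to \infty$ whenever $\alpha > \Ptop - L_{I_\delta, \nu} + \eta$. This gives $P_{\overline X_I}(f, \phi, \varepsilon) \leq \Ptop - L_{I_\delta, \nu} + \eta$, and letting $\eta \to 0$ followed by $\varepsilon \to 0$ delivers the middle inequality; the final bound $\leq \Ptop(f, \phi)$ is immediate since $L_{I_\delta, \nu} \geq 0$.

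The main subtlety I expect is the parameter bookkeeping between $\varepsilon$, $\delta$, and $\omega_\psi(\varepsilon/2)$: one must take $\varepsilon$ small enough (depending on the target $\delta$) to ensure $\delta' = \delta - \omega_\psi(\varepsilon/2) > 0$, so that the inflated interval $I_{\delta' + \omega_\psi(\varepsilon/2)}$ coincides with $I_\delta$ and the large deviations rate sits on the right side of the target. A secondary point worth noting is that the full (non-weak) Gibbs hypothesis forces $\supp(\nu) = M$, which is precisely what allows the centres $y \in F_n$ to be placed anywhere in $A_n^{\delta'}$ without degrading the Gibbs lower bound used above.
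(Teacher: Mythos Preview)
Your argument is essentially the paper's: both cover $\overline X_I$ by dynamical balls with centres in the time-$n$ deviation sets $A_n^{\delta'}=X_{I_{\delta'},n}$, extract an $(n,\varepsilon/2)$-separated subfamily at each level, feed the Gibbs lower bound into the Carath\'eodory sum, and bound the resulting $\sum_n e^{(P-\alpha)n}\nu(A_n^\delta)$ via the definition of $L_{I_\delta,\nu}$. The paper packages your modulus-of-continuity step as a separate lemma (its Lemma~\ref{lemma:aux1}), and it chooses the ball centres from $\overline X_I$ itself rather than from all of $A_n^{\delta'}$, but this is cosmetic: the separation-then-Gibbs-then-large-deviations computation is identical.

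One small caveat: neither your argument nor the paper's actually \emph{proves} the leftmost inequality $P_{\underline X_I}\le P_{\overline X_I}$ for a general closed interval $I$ (the inclusion $\underline X_I\subset\overline X_I$ can fail). You are right that the same cover yields the same upper bound $\Ptop-L_{I_\delta,\nu}$ on both pressures, and that the set inclusion does hold in the half-line case $I_c$ used in the corollaries; the paper simply asserts the inequality at the end of its proof without further comment.
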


In fact, it follows from \cite[Theorem~2.1]{Va12} that, since $\nu$ is a (strong) Gibbs measure, if
$\int \psi d\mu_{\phi} \notin I_{\delta}$ then the large deviations  property that $L_{I_{\delta}}>0$
holds and, consequently, the topological pressure of  the sets $\underline{X}_{I}$ and $\overline{X}_{I}$
is strictly smaller than  $\Ptop(f,\phi)$. When no confusion is possible we shall omit the dependence
of $L_{I,\nu}$ on $\nu$. Our result is applicable to the case of topological repellers.

\begin{remark}\label{rmk:hyperbolic}
We notice that Theorem~\ref{thm:exp-add} also holds for bilateral subshifts of finite type
and \emph{locally H\"older} observables. In fact, given such an observable $g$ there exists
$\psi$ that is constant along local stable leaves (depends only on future coordinates of the shift) and such that
$g= \psi + u\circ f - u$ for some continuous $u$ (see~\cite{Bow75}). Thus
$
\left| \frac1n S_n g (x) -  \frac1n S_n \psi (x) \right|
	\leq \frac{2\| u \|_0}{n}
$
tends to zero (uniformly) as approaches infinity and, consequently, $X_I(g)=X_I(\psi)$, $\overline{X}_I(g)=\overline{X}_I(\psi)$
and $\underline{X}_I(g)=\underline{X}_I(\psi)$ for all intervals $I\subset \mathbb R$. Hence, using the Gibbs property
and replacing dynamic balls by cylinders associated to the Markov partition same conclusions of 
Theorem~\ref{thm:exp-add} still hold.
\end{remark}

\begin{definition}
Given a compact metric space $(M,d)$ and a continuous open map $f: M\to M$ we say that an $f$-invariant set
$\Lambda \subset M$ is a \emph{repeller} for $f$ if  there are $C,\lambda,\vep>0$ so that 
$d(f^n(x),f^n(y)) \ge C e^{\lambda n} d(x,y)$ for all $y\in B(x,n,\vep)$ and $n\ge 1$.
\end{definition}

It is clear that the later holds for smooth expanding maps. 
Recall that an observable $\psi: M\to \mathbb R$ is \emph{cohomologous to a constant} if there exists a
constant $c$ and a measurable function $u$ so that $\psi=u\circ f - u + c$.

\begin{definition}
 Given an observable $\psi: M\to \mathbb R$ and $t\in \R$ the  \emph{free energy} $\cE_{f,\phi,\psi} $ is
\begin{equation*}
\cE_{f,\phi,\psi}(t)
    = \limsup_{n\to\infty} \frac1n \log \int e^{t S_n  \psi} \,d\mu_{f,\phi}.
\end{equation*}
\end{definition}

In many cases, e.g. when the transfer operator associated to the potential $\phi$ has a spectral gap property, the expression in the right hand side does converge to
$$
\cE_{f,\phi, \psi}(t)
    :=\lim_{n\to\infty} \frac1n \log \int e^{t S_n \psi} \; d\mu_{f,\phi}
    = \Ptop(f, \phi +t\psi) -\Ptop(f, \phi).
$$
If this is the case and the topological pressure is smooth
then $t \mapsto \cE_{f,\phi,\psi}(t)$ is affine if $\psi$ is cohomologous to a constant and otherwise
$t \mapsto \cE_{f,\phi,\psi}(t)$ is strictly convex in some interval $J=[t_-,t_+]$ and one can associate the
\emph{``local"   Legendre transform} $I_{f,\phi,\psi}$ given by
\begin{equation*}
I_{f,\phi,\psi}(s)
    = \sup_{t\in J} \; \big\{ s\, t-\cE_{f,\phi,\psi}(t) \big\}
\end{equation*}
and well defined in the interval $ [\cE_{f,\phi,\psi}'(t_-), \cE_{f,\phi,\psi}'(t_+)]$.
The interval $J$ may depend on $f$, $\phi$ and $\psi$ and that $I_{f,\phi,\psi}(s)$ can often
be proved to be a (local) level-1 large deviations \emph{rate function} (see e.g.~\cite{Young, RY08, Bomfim}): for all $[a,b]\subset J$
\begin{equation}\label{eq:lld1}
\limsup_{n\to\infty} \frac1n \log \nu_{f,\phi}
    \left(x\in M : \frac1n S_n\psi(x) \in [a,b] \right)
    \le-\inf_{s\in[a,b]} I_{f,\phi,\psi}(s)
\end{equation}
and
\begin{equation}\label{eq:lld2}
\liminf_{n\to\infty} \frac1n \log \nu_{f,\phi}
    \left(x\in M : \frac1n S_n\psi(x) \in (a,b) \right)
    \ge-\inf_{s\in(a,b)} I_{f,\phi,\psi}(s)
\end{equation}
As a byproduct of our previous result and the large deviations property 
and the fact that  equilibrium states associated to H\"older continuous potentials satisfy the Gibbs property we deduce the
following:

\begin{maincorollary}\label{thm:exp-add1}
Let $f: M\to M$ be a continuous map, $\Lambda \subset M$ be a transitive repeller, $\phi:M \to \mathbb R$ be
an H\"older continuous potential and $\mu=\mu_{f,\phi}$ be the unique equilibrium state for $f\mid_\Lambda$ with respect to $\phi$. Then, for any continuous observable $\psi: M\to \mathbb R$ and $c>0$
$$
P_{\underline{X}_{c} }(f , \phi) \leq P_{\overline{X}_{c} }(f , \phi)
    \leq P_{\topp}(f , \phi) - L_{c -\delta}
    <\Ptop (f,\phi)
$$
for every small $\delta$, where $L_{c} := L_{I_c}$ is defined as in \eqref{eq:Lc} with respect to
$I_c=(-\infty, \int \psi d\mu_{\phi} -c] \cup [\int \psi d\mu_{\phi} + c, +\infty)$.
\end{maincorollary}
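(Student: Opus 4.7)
The plan is to derive Corollary~\ref{thm:exp-add1} as a direct consequence of Theorem~\ref{thm:exp-add} together with the exponential large deviations upper bound for Gibbs measures on transitive repellers. The first step is purely a translation of the setup: writing $m := \int \psi\, d\mu_\phi$, one has
$$
I_c \,=\, (-\infty,\, m - c\,] \,\cup\, [\,m + c,\, +\infty), \qquad \overline{X}_c = \overline{X}_{I_c}, \qquad \underline{X}_c = \underline{X}_{I_c},
$$
and a direct computation shows that the $\delta$-neighborhood of $I_c$ is contained in $I_{c-\delta}$, so that $L_{(I_c)_\delta,\nu} \geq L_{I_{c-\delta},\nu} = L_{c-\delta}$. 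Hence the inequality we have to prove reduces to an application of Theorem~\ref{thm:exp-add} to $I = I_c$ followed by a positivity statement about $L_{c-\delta}$.

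Next I would verify the hypotheses of Theorem~\ref{thm:exp-add} in this setting. Since $\phi$ is H\"older continuous and $\Lambda$ is a repeller, uniform expansion together with H\"older continuity yields $\sup_n \gamma_n(\phi,\delta) < \infty$, which is exactly the Bowen condition \eqref{eq:bowen}. Classical Ruelle--Bowen thermodynamic formalism for transitive repellers then produces a unique equilibrium state $\mu_\phi$ and a (strong) Gibbs reference measure $\nu$, obtained from the dual of the transfer operator, satisfying $\mu_\phi \ll \nu$. Plugging this $\nu$ into Theorem~\ref{thm:exp-add} already gives, for every small $\delta > 0$,
$$
P_{\underline{X}_c}(f,\phi) \,\leq\, P_{\overline{X}_c}(f,\phi) \,\leq\, P_{\topp}(f,\phi) - L_{c-\delta}.
$$

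The remaining, substantive point is to upgrade this bound to a strict inequality, i.e.\ to establish $L_{c-\delta} > 0$ for some $\delta \in (0,c)$. For such $\delta$ the set $I_{c-\delta}$ sits at positive distance from the mean $m$, and since $\nu$ is a strong Gibbs measure the exponential large deviations upper bound of \cite[Theorem~2.1]{Va12}, in the form \eqref{eq:lld1} with a rate function that vanishes only at $m$, yields $L_{c-\delta} > 0$. I anticipate the large deviations estimate to be the main (non-trivial) ingredient; everything else is bookkeeping. The only degenerate case to exclude is when $\psi$ is cohomologous to a constant: then Birkhoff averages converge uniformly to $m$, both deviation sets $\overline{X}_c$ and $\underline{X}_c$ are empty for every $c>0$, and the corollary is vacuous.
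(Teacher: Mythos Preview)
Your proposal is correct and follows exactly the route the paper takes: it derives the corollary directly from Theorem~\ref{thm:exp-add} applied to $I=I_c$, using that equilibrium states for H\"older potentials on transitive repellers are (strong) Gibbs measures, and then invokes \cite[Theorem~2.1]{Va12} to obtain $L_{c-\delta}>0$ and hence the strict inequality. The only cosmetic point is that Theorem~\ref{thm:exp-add} is stated for a closed \emph{interval} $I$ while $I_c$ is a union of two half-lines, but the proof (via Lemma~\ref{lemma:aux1}) only uses that $I$ is closed, so this causes no difficulty.
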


Since, in the previous results, the topological pressure  is strictly smaller than the topological
pressure $\Ptop(f,\phi)$, this has particularly interesting applications in connection with the specification
property. Recall that a system satisfies the \emph{specification property} if
for any $\vep>0$ there exists an integer $N=N(\vep)\geq 1$ such that
the following holds: for every $k\geq 1$, any points $x_1,\dots,
x_k$, and any sequence of positive integers $n_1, \dots, n_k$ and
$p_1, \dots, p_k$ with $p_i \geq N(\vep)$ 
there exists a point $x$ in $M$ such that
$$
\begin{array}{cc}
d\Big(f^j(x),f^j(x_1)\Big) \leq \vep, &\forall \,0\leq j \leq n_1
\end{array}
$$
and
$$
\begin{array}{cc}
d\Big(f^{j+n_1+p_1+\dots +n_{i-1}+p_{i-1}}(x) \;,\; f^j(x_i)\Big)
        \leq \vep &
\end{array}
$$
for every $2\leq i\leq k$ and $0\leq j\leq n_i$.
We also obtain the following result:

\begin{maincorollary}\label{cor:thompson}
Let $f: M\to M$ be a continuous map admitting a transitive
repeller $\Lambda \subset M$ and $\psi: M\to \mathbb R$
be such that the set of of irregular points satisfies $E_\psi \neq \emptyset$. Then 
$
\Ptop(f,\phi) = P_{E_\psi}(f,\phi) > P_{\overline{X}_c}(f,\phi)
$
for every $c>0$.
\end{maincorollary}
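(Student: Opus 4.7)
The statement decomposes into two independent assertions: the equality $\Ptop(f,\phi) = P_{E_\psi}(f,\phi)$, which says that the irregular set (if nonempty) is a ``full pressure'' set, and the strict inequality $P_{E_\psi}(f,\phi) > P_{\overline{X}_c}(f,\phi)$, which says that every ``far-from-average'' piece of the irregular set is strictly thinner than the whole system. My plan is to handle them separately, using Corollary~\ref{thm:exp-add1} for the second and a Thompson-type saturation argument for the first.

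For the equality, I would reduce to the result of Thompson in \cite{Daniel}: if $(f,M)$ satisfies the specification property, $\phi$ is continuous and $E_\psi \neq \emptyset$, then $P_{E_\psi}(f,\phi) = \Ptop(f,\phi)$. A transitive topological repeller $\Lambda$ of a continuous open map is uniformly expanding, admits a finite Markov partition, and (after restricting to a spectral component if necessary) is conjugate to a topologically mixing one-sided subshift of finite type, so $f|_\Lambda$ satisfies specification and Thompson's theorem applies. This gives the first identity. Since the irregular set $E_\psi$ contains $\overline{E}_{\mu,\psi,c}\subset \overline{X}_c$, we have $P_{E_\psi}(f,\phi)\ge P_{\overline{X}_c}(f,\phi)$ trivially, so only the strict inequality needs substance.

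For the strict inequality I would apply Corollary~\ref{thm:exp-add1} directly. Given $c>0$, choose $\delta>0$ with $c-\delta>0$. By construction $\int \psi\, d\mu_\phi \notin I_{c-\delta}$, and since the H\"older potential $\phi$ on the transitive repeller admits a unique equilibrium state $\mu_\phi$ which is a Gibbs measure, the large deviations upper bound recalled after Theorem~\ref{thm:exp-add} (cf.\ \cite[Theorem~2.1]{Va12}) yields $L_{c-\delta}>0$. Combining with Corollary~\ref{thm:exp-add1},
$$
P_{\overline{X}_c}(f,\phi) \;\le\; \Ptop(f,\phi) - L_{c-\delta} \;<\; \Ptop(f,\phi) \;=\; P_{E_\psi}(f,\phi),
$$
as desired.

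The main obstacle is essentially notational: the ``hard'' analytic content, namely the exponential large deviations bound that forces $L_{c-\delta}>0$, is already packaged in Theorem~\ref{thm:exp-add} and Corollary~\ref{thm:exp-add1}. What is left to check is that the transitive repeller hypothesis suffices to invoke Thompson's saturation theorem, which is standard since uniformly expanding, transitive, compact systems have specification (after a spectral decomposition if one only has transitivity and not topological mixing), and to verify that the H\"older continuous potential indeed admits a Gibbs equilibrium state on $\Lambda$, which is classical Bowen theory under hypothesis \eqref{eq:bowen}.
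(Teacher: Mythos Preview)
Your proof is correct and follows exactly the paper's own argument: the equality $P_{E_\psi}(f,\phi)=\Ptop(f,\phi)$ comes from Thompson's theorem \cite{Daniel} via specification on the transitive repeller, and the strict inequality is read off from Corollary~\ref{thm:exp-add1} together with $L_{c-\delta}>0$. One small slip: the sentence ``Since the irregular set $E_\psi$ contains $\overline{E}_{\mu,\psi,c}\subset \overline{X}_c$, we have $P_{E_\psi}(f,\phi)\ge P_{\overline{X}_c}(f,\phi)$ trivially'' is not a valid deduction (those inclusions do not give $\overline{X}_c\subset E_\psi$), but you never actually use it---your final chain of inequalities establishes $P_{\overline{X}_c}(f,\phi)<\Ptop(f,\phi)=P_{E_\psi}(f,\phi)$ directly, which is all that is needed.
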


In fact,  it follows from~\cite{Daniel} that a dynamical system with the specification property is such that
irregular sets are either empty or have full topological pressure with respect to any continuous potential.
Since the dynamical systems restricted to the transitive repeller satisfies the specification property then the
first equality follows from \cite{Daniel}.  In particular, using
$$
E_\psi=\bigcup_{n\ge 1} \, [E_\psi\cap \overline{X}_{1/n}]
$$
and also $P_{E_\psi}(f,\phi)= \sup_{n\ge 1} P_{E_\psi\cap \overline{X}_{1/n}}(f,\phi)$ the previous corollary roughly
means that  despite the set of irregular points having full topological pressure, the ones that give a larger contribution
to the topological pressure are those with time averages which are infinitely often very close to the mean.

One could wonder if there could exist a strict inequality
$P_{\underline{X}_{c} }(f , \phi) < P_{\overline{X}_{c} }(f , \phi)$ and what is the regularity of the topological
pressure of those subsets. The next theorem provides an answer to these questions under the assumption of
uniform expansion.

\begin{maintheorem}\label{thm:precise.regularity}
Let $f: M\to M$ be a continuous map admitting a mixing repeller $\Lambda \subset M$,
$\phi:M \to \mathbb R$ be a continuous potential so that $\mu_\phi$ is the unique equilibrium state for
$f$  with respect to $\phi$ and $\mu_\phi \ll \nu$ where $\nu$ is a Gibbs measure. If $\phi,\psi$ satisfy the Bowen condition, $\psi$ is not cohomologous to a constant and $\int\psi\,d\mu_{f,\phi}=0$ then
$$
P_{\overline{X}_{c} }(f , \phi) \leq P_{\topp}(f , \phi) - \min\{I_{f,\phi,\psi}(- c) \;,\; I_{f,\phi,\psi}(c)\}
$$
where $I_{f,\phi,\psi}$ is the large deviations rate function.
If $0 \notin [c_1,c_2]$ and $c=\min\{|c_1|,|c_2|\}$ then
either $\overline{X}_{c} =\emptyset$ or
\begin{align*}
P_{\overline{X}_{c}}(f,\phi)
    & = P_{\underline{X}_{c}} (f,\phi) = P_{X(c_*)} (f,\phi) 
    = P_{\topp}(f , \phi) - I_{f,\phi,\psi}(c_*).
\end{align*}
where
\begin{equation}\label{eqces}
c_*=
	\begin{cases}
	c, \text{ if } I_{f,\phi,\psi}(c) < I_{f,\phi,\psi}(-c) \\
	-c, \qquad \text{ otherwise. }
	\end{cases}
\end{equation}
In particular $\mathbb R^+_0 \ni c\mapsto P_{\overline{X}_c}(f,\phi)$ is differentiable,
concave and strictly decreasing.
Furthermore, the right hand side expression varies continuously with $c$ and also varies continuously
with $\phi$, $\psi$ in the $C^\alpha$-topology.
Moreover, if $V$ is a compact metric space and
$V \ni v \mapsto (f_{v})_v$  is a continuous (in the $C^1$-topology) family of expanding maps on $M$
then $v\mapsto P_{\overline{X}_c}(f_v,\phi)$ is also a continuous function.
\end{maintheorem}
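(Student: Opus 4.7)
The plan is to sandwich $P_{\overline{X}_c}(f,\phi)$ between matching upper and lower bounds equal to $\Ptop(f,\phi)-I_{f,\phi,\psi}(c_*)$, and then read off the regularity and stability claims from the smoothness of the free energy. For the upper bound, since $\phi$ satisfies the Bowen condition on the mixing repeller $\La$ the equilibrium state $\mu_\phi$ is a (genuine) Gibbs measure, so Corollary~\ref{thm:exp-add1} applies and gives $P_{\overline{X}_c}(f,\phi)\le\Ptop(f,\phi)-L_{c-\delta}$ for arbitrarily small $\delta>0$. Plugging in the level-1 large deviations inequality \eqref{eq:lld1}, and using that $I_{f,\phi,\psi}$ is convex with minimum value $0$ at $0$ (since $\int\psi\,d\mu_\phi=0$), one has $L_{c-\delta}\ge\inf_{|s|\ge c-\delta}I_{f,\phi,\psi}(s)=\min\{I_{f,\phi,\psi}(c-\delta),\,I_{f,\phi,\psi}(-(c-\delta))\}$; letting $\delta\to 0$ and invoking continuity of $I_{f,\phi,\psi}$ yields $P_{\overline{X}_c}(f,\phi)\le\Ptop(f,\phi)-\min\{I_{f,\phi,\psi}(c),I_{f,\phi,\psi}(-c)\}=\Ptop(f,\phi)-I_{f,\phi,\psi}(c_*)$.

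For the matching lower bound I would invoke the classical multifractal formalism for level sets of Birkhoff averages on mixing repellers with H\"older potentials: under the stated hypotheses, for every $\al$ in the open range $(\cE_{f,\phi,\psi}'(t_-),\cE_{f,\phi,\psi}'(t_+))$ there is a unique $t=t(\al)$ with $\int\psi\,d\mu_{\phi+t\psi}=\al$ and
\begin{equation*}
P_{X(\al)}(f,\phi)=h_{\mu_{\phi+t\psi}}(f)+\int\phi\,d\mu_{\phi+t\psi}=\Ptop(f,\phi)-I_{f,\phi,\psi}(\al),
\end{equation*}
the second identity being Legendre duality. Applied at $\al=c_*$, the fact that $|c_*|=c$ forces $X(c_*)\subset\underline{X}_c\subset\overline{X}_c$, hence $\Ptop(f,\phi)-I_{f,\phi,\psi}(c_*)=P_{X(c_*)}(f,\phi)\le P_{\underline{X}_c}(f,\phi)\le P_{\overline{X}_c}(f,\phi)$, which combined with the upper bound closes the chain of equalities. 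The alternative $\overline{X}_c=\emptyset$ corresponds precisely to $c$ lying outside the spectrum of attainable Birkhoff averages, i.e., outside the effective domain of $I_{f,\phi,\psi}$.

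For the regularity claims, since $\psi$ is not cohomologous to a constant the free energy $\cE_{f,\phi,\psi}$ is real-analytic and strictly convex (spectral gap of the transfer operator $\cL_{\phi+t\psi}$ for H\"older potentials on mixing repellers), whence $I_{f,\phi,\psi}$ is real-analytic and strictly convex on its effective domain. On each branch where $c_*$ equals $+c$ or $-c$, the map $c\mapsto\Ptop(f,\phi)-I_{f,\phi,\psi}(\pm c)$ is real-analytic, strictly decreasing and concave; at any transition point $c_0$ where $I_{f,\phi,\psi}(c_0)=I_{f,\phi,\psi}(-c_0)$ one checks by differentiating the defining identity of $c_*$ that the one-sided slopes $\mp I_{f,\phi,\psi}'(\pm c_0)$ coincide, giving global differentiability, concavity and strict monotonicity. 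Continuity under $C^\alpha$-perturbations of $\phi,\psi$ and $C^1$-perturbations of the expanding map $f$ then follows from analytic perturbation theory of the transfer operator: the leading eigenvalue $e^{\Ptop(f,\phi+t\psi)}$, the associated Gibbs state, and hence $\cE_{f,\phi,\psi}$ and $I_{f,\phi,\psi}$, depend continuously on the parameters, and so does $c_*$. The main technical obstacle will be the lower bound: producing Moran-type subsets of $X(c_*)$ carrying the prescribed pressure requires careful orbit specification, and becomes delicate when $c_*$ approaches the boundary of the effective domain of $I_{f,\phi,\psi}$, where the usual variational arguments degenerate.
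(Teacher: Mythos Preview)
Your proposal is correct and follows essentially the same route as the paper. Both arguments sandwich $P_{\overline{X}_c}(f,\phi)$ between the upper bound coming from Theorem~\ref{thm:exp-add} plus the level-1 large deviations estimate, and the lower bound $P_{X(c_*)}(f,\phi)=\Ptop(f,\phi)-I_{f,\phi,\psi}(c_*)$ supplied by the multifractal variational principle for Birkhoff level sets; both then derive regularity and parameter dependence from the spectral gap of the transfer operator and the resulting analyticity of $\cE_{f,\phi,\psi}$. One remark: your closing worry about building Moran-type subsets of $X(c_*)$ is not an obstacle here, since the paper simply invokes Thompson's variational principle \cite{top2} as a black box for the identity $P_{X(\alpha)}(f,\phi)=\sup\{h_\eta(f)+\int\phi\,d\eta:\int\psi\,d\eta=\alpha\}$ rather than reproving it.
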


\begin{figure}[h]
\includegraphics[scale=.45]{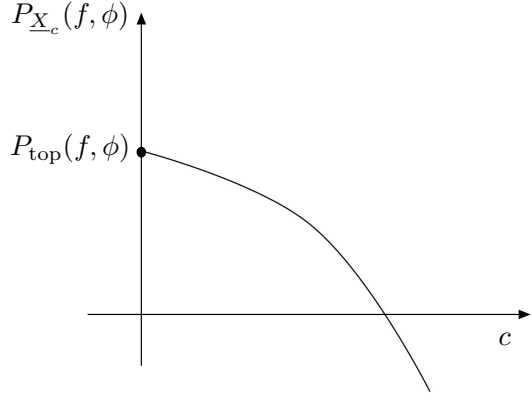}
\vspace{-.5cm}
\caption{Continuity, monotonicity and concavity of the pressure function }
\label{Figure1}
\end{figure}

Under the previous assumptions we can provide a more detailed description of the irregular sets
$\overline{E}_c$ as follows.

\begin{maincorollary}\label{cor:thompson2}
Let $f: M\to M$ be a continuous map admitting a mixing repeller $\Lambda \subset M$,
$\phi:M \to \mathbb R$ be a continuous potential so that $\mu_\phi$ is the unique equilibrium state for
$f$  with respect to $\phi$ and $\mu_\phi \ll \nu$ where $\nu$ is a Gibbs measure. Set $\overline{E}_c=\overline{X}_c \cap E_\psi$ the irregular set  contained in $\overline{X_c}$. If $E_\psi\neq \emptyset$ then 
for every $c>0$:
\begin{enumerate}
\item $\Ptop(f,\phi) = P_{E_\psi}(f,\phi) > P_{\overline{X}_c}(f,\phi) \ge P_{\overline{E}_c}(f,\phi)$,
\item if $\overline{E}_c \neq \emptyset$ then $P_{\overline{X}_c}(f,\phi) = P_{\overline{E}_c}(f,\phi)$ and 
	 $c\mapsto P_{\overline{E}_c}(f,\phi)$ is differentiable, concave and strictly decreasing.
\end{enumerate}
\end{maincorollary}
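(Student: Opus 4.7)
Part (1) is essentially immediate from what has already been stated. Since $\overline{E}_c\subset\overline{X}_c$, monotonicity of the (Pesin--Pitskel) topological pressure on non-compact sets gives $P_{\overline{X}_c}(f,\phi)\geq P_{\overline{E}_c}(f,\phi)$. The identity $P_{E_\psi}(f,\phi)=\Ptop(f,\phi)$ is Thompson's theorem in \cite{Daniel}, applied to the mixing repeller, which inherits the specification property. The strict inequality $\Ptop(f,\phi)>P_{\overline{X}_c}(f,\phi)$ is exactly Corollary~\ref{thm:exp-add1}. Chaining these together yields the whole display in (1).

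For (2), the regularity assertions transfer from Theorem~\ref{thm:precise.regularity} as soon as one proves the equality $P_{\overline{E}_c}(f,\phi)=P_{\overline{X}_c}(f,\phi)$; only the lower bound $P_{\overline{E}_c}(f,\phi)\geq P_{X(c_*)}(f,\phi)=P_{\overline{X}_c}(f,\phi)$ requires work. The plan is to mimic the Pfister--Sullivan / Thompson moving-target construction. Fix a small $\ep>0$ and choose two distinct real numbers $\al_1,\al_2$ with $|\al_i-c_*|<\ep$, $|\al_i|>c$, and $X(\al_i)\neq\emptyset$; the hypothesis $\overline{E}_c\neq\emptyset$ together with the multifractal formula $P_{X(\al)}(f,\phi)=\Ptop(f,\phi)-I_{f,\phi,\psi}(\al)$ from Theorem~\ref{thm:precise.regularity} guarantees that such a non-degenerate pair exists near $c_*$. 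Using the specification property of $f|_\Lambda$, concatenate orbit segments of points $x_j\in X(\al_{\sigma(j)})$ (with $\sigma(j)$ alternating between $1$ and $2$) along block lengths $n_1\ll n_2\ll\cdots$ growing fast enough to absorb the specification gaps and to force the finite-time Birkhoff averages $\tfrac{1}{n}S_n\psi$ to oscillate between values arbitrarily close to $\al_1$ and $\al_2$. This produces a saturated Cantor-type set $G(\al_1,\al_2)$ whose points $x$ satisfy $\liminf \tfrac{1}{n}S_n\psi(x)=\al_1$ and $\limsup \tfrac{1}{n}S_n\psi(x)=\al_2$; in particular $G(\al_1,\al_2)\subset E_\psi$ and, because $\max(\al_2,-\al_1)>c$, also $G(\al_1,\al_2)\subset\overline{X}_c$, so $G(\al_1,\al_2)\subset\overline{E}_c$.

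The standard $(n,\ep)$-Bowen ball covering estimate, carried out exactly as in Thompson's proof of $P_{E_\psi}=\Ptop$ in \cite{Daniel}, then yields
\[
P_{\overline{E}_c}(f,\phi)\;\geq\;P_{G(\al_1,\al_2)}(f,\phi)\;\geq\;\min\{P_{X(\al_1)}(f,\phi),\,P_{X(\al_2)}(f,\phi)\}.
\]
Letting $\al_1,\al_2\to c_*$ and invoking the continuity of $\al\mapsto P_{X(\al)}(f,\phi)=\Ptop(f,\phi)-I_{f,\phi,\psi}(\al)$ embedded in Theorem~\ref{thm:precise.regularity} gives $P_{\overline{E}_c}(f,\phi)\geq P_{X(c_*)}(f,\phi)=P_{\overline{X}_c}(f,\phi)$, which combined with the trivial reverse bound from (1) closes the equality. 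Differentiability, concavity and strict monotonicity of $c\mapsto P_{\overline{E}_c}(f,\phi)$ then follow verbatim from the corresponding properties of $c\mapsto P_{\overline{X}_c}(f,\phi)$ proved in Theorem~\ref{thm:precise.regularity}.

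The main obstacle is calibrating the block lengths $n_j$ and the specification gaps in the moving-target construction so that two competing requirements hold simultaneously: (a) the Birkhoff averages along the glued orbits accumulate at \emph{two distinct} values $\al_1,\al_2$, so that the orbit genuinely lies in $E_\psi$ and not merely in $\overline{X}_c$; and (b) the resulting $(n,\ep)$-separated subsets of $G(\al_1,\al_2)$ retain enough of the Bowen-ball count of $X(\al_1)$ and $X(\al_2)$ to realize the pressure lower bound above. This is the well-known delicate balance in multifractal analysis of irregular sets, and the assumption $\overline{E}_c\neq\emptyset$ is used precisely to guarantee that a non-degenerate choice of $(\al_1,\al_2)$ near $c_*$ with $|\al_i|>c$ is available.
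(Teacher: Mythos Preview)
Your proposal is correct and follows essentially the same route as the paper: both prove part~(1) by combining Thompson's theorem with Corollary~\ref{thm:exp-add1}, and both prove the lower bound in part~(2) by a Thompson-style specification/fractal construction using two distinct Birkhoff-average targets strictly beyond level $c$, then invoking continuity of the rate function to pass to $c_*$. The only cosmetic difference is that the paper parametrizes the two targets by ergodic measures $\nu_1,\nu_2$ (obtained via the variational formula for $P_{\overline{X}_c}$ and the entropy-dense approximation of \cite{EKW}) and then quotes Thompson's construction in \cite{Daniel} verbatim, whereas you parametrize by level values $\alpha_1,\alpha_2$ near $c_*$; since $P_{X(\alpha)}=\sup\{h_\eta(f)+\int\phi\,d\eta:\int\psi\,d\eta=\alpha\}$, the two viewpoints are interchangeable and the resulting fractal sets and pressure estimates coincide.
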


Actually, in this setting we can provide also estimates for irregular sets corresponding to empirical measures
$\delta_{x,n}:=\frac1n\sum_{j=0}^{n-1} \delta_{f^j(x)}$. Let $\mathcal M_1$ denote the set of probability measures
on $M$ and let $d$ be any metric compatible with the weak$^*$ topology
(e.g. $d(\mu,\nu)=\sum_{k\ge 1} \frac1{2^k \|g_k\|_0} |\int g_k \,d\mu-\int g_k \,d\nu|$ for some countable and dense subset $(g_k)_k$ of continuous observables). We say a \emph{level-2 large deviations principle} holds for $\nu$  if there is a lower semicontinuous
function $Q: \mathcal M_1 \to [0,+\infty]$ so that
$$
\limsup_{n\to\infty} \frac1n \log \nu_{f,\phi}
    \left(x\in M : \delta_{x,n} \in U \right)
    \le-\inf_{\eta \in U} Q(\eta)
$$
for every closed set $U\subset \mathcal M_1$ and
$$
\liminf_{n\to\infty} \frac1n \log \nu_{f,\phi}
    \left(x\in M : \delta_{x,n} \in V \right)
    \ge -\inf_{\eta \in V} Q(\eta)
$$
for every open set $V\subset \mathcal M_1$.
Level-2 large deviations principles in dynamical systems
have been obtained e.g. in \cite{CRL98, CTY13}. 
Consider
$$
\overline{Y}_{\mu, c}
    \!=\! \{ x \in M :
    \limsup_{n\to \infty} d (\delta_{x,n},\mu) \geq c \}
\quad\text{and}\quad
\underline{Y}_{\mu, c}
    \!=\! \{ x \in M :
    \liminf_{n\to \infty} d (\delta_{x,n},\mu) \geq c \}
$$
and, for $C \subset \mathcal{M}_{1}$, define $Y(C) := \{x \in M : \lim_{n \to +\infty}\delta_{x,n} \in C \}$.

Part of the strategy can be used to estimate the topological pressure of points with specified behaviour of the
empirical measures for dynamical systems that have the $g$-almost product structure and uniform separation property.
These notions, introduced by C. Pfister and W. Sullivan \cite{PS05}, are strictly
weaker than the specification property and the positive expansive property, respectively.
In fact, the uniform separation property is true even for asymptotically entropy-expansive maps.
Let us  recall these notions.

\begin{definition}
Let $M$ be a compact metric space and $f : M \rightarrow M$ be continuous.
A nondecreasing unbounded map $g : \N \rightarrow \N$ is a \emph{blow-up function} if
$g(n) < n$ for all $n$ and $\lim_{n \to +\infty} g(n) / n = 0$.
\end{definition}

For any subset of integers $\Lambda\subset [0,N]$, we will use the
family of distances in the metric space $X$ given by
$d_\Lambda(x,y)=\max \{d(f^ix,f^iy):i\in\Lambda\}$ and consider
the balls $B_{\Lambda}(x,\vep)=\{y\in X:d_\Lambda(x,y)<\vep\}$.
Given a blow-up function $g$, $\vep>0$ and $n\ge 1$, the \emph{$g$-mistake dynamical ball}
$B_n(g;x,\vep)$ of radius $\vep$ and length $n$ associated to $g$ is defined by
\begin{eqnarray*}
B_n(g;x,\vep)=\{ y\in X\mid y\in B_{\Lambda}(x,\vep)~\hbox{for
some}~\Lambda\in I(g;n,\vep)\}
=\bigcup_{\Lambda\in I(g;n,\vep)}B_{\Lambda}(x,\vep)
\end{eqnarray*}
where $I(g;n,\vep)=\{ \Lambda\subset [0,n-1]\cap\mathbb{N}\mid
\# \Lambda \geq n-g(n)\}$. We are now in the position to define the
$g$-almost product property.

\begin{definition}
Let $g$ be a blow-up function.  The continuous map $f : M \rightarrow M$ has the
\emph{$g$-almost product property} if there exists a nonincreasing function $m : \mathbb{R}^{+} \rightarrow \N$, such
that for any $k \in \N$, any points $x_{1}, x_2,  \ldots, x_{k}$, any positive $\vep_{1}, \ldots \vep_{k}$ and any integers $n_{i} \geq m(\vep_{1})$ for $i=1\dots k$ it holds that
$
\bigcap_{j=1}^{k}f^{-M_{j-1}}B_{n_{j}}(g ; x_{j} , \vep_{j}) \neq \emptyset.
$
where $M_{0}=0$ and $M_{i}=n_{1}+n_{2}+\cdots+n_{i},i=1,2,\cdots,k-1.$
\end{definition}

Given $\delta,\vep>0$ and $n\ge 1$ we say that two points $x,y\in X$ are \emph{$(\delta,n,\vep)$-separated}
if $\#\{ 0 \le j \le n-1 : d(f^j(x),f^j(y))>\vep \} \ge \delta n$. In addition, a set $E\subset X$ is $(\delta,n,\vep)$-separated
if all pairs of distinct points in $E$ are $(\delta,n,\vep)$-separated. This means that the moments at which the
two pieces of orbit are $\vep$-separated form a $\delta$-proportion.

\begin{definition}
A continuous map $f : M \rightarrow M$ has the \emph{uniform separation property} if
for any $\eta$ there exists $\delta^{} > 0$ and $\vep^{} > 0$ so that for any ergodic probability measure
$\mu$ and any neighborhood $F$ of $\mu$ in the space of all probability measures $\mathcal M_1$
there exists $n^{}_{F,\mu,\eta}\ge 1$ such that
$$
N(F; \delta^{},n, \vep^{})
    \geq \exp \; [ n(h_\mu(f) - \eta)]
$$
for all $n \geq n^{}_{F,\mu,\eta}$, where $N(F; \delta^{},n, \vep^{})$ is the maximal cardinality of a
$(\delta^{},n, \vep)$-separated  subset of the set $\{x \in M : \delta_{x,n} \in F\}.$
\end{definition}

Taking these notions in account we
also obtained the following result.

\begin{maintheorem}\label{thm:level-2}
Let $f: M\to M$ and $\phi : M \rightarrow \R$ be continuous, $\nu$ be a (not necessarily invariant) Gibbs measure and
assume $\mu=\mu_{f,\phi}\ll \nu$ is the unique equilibrium state for $f$ with respect to $\phi$. Assume the metric $d$ on
$\mathcal{M}_{1}$ has the following properties:
\begin{itemize}
 \item[i.] $d(\eta_{1} + \eta , \eta_{2} + \eta) = d(\eta_{1}, \eta_{2}), \forall \eta_{1},\eta_{2},\eta \in \mathcal{M}_{1}; $
 \item[ii.] $d(t\eta_{1} , t\eta_{2}) = td(\eta_{1} , \eta_{2}), \forall \eta_{1}, \eta_{2} \in \mathcal{M}_{1} \;\;\mbox{and}\;\; t >0,$
\end{itemize}
If a level-2 large deviations principle holds for $\nu$ then for every $c>0$
$$
P_{\overline{Y}_{\mu,c} }(f , \phi) \leq P_{\topp}(f , \phi) - \inf_{d(\eta ,\mu) \geq c}Q(\eta)
		\leq \Ptop(f,\phi).
$$
In addition, if $f$ satisfies the almost product and uniform separation properties and $ 0< c_1 < c_{2}$ then
either $\overline{Y}_{\mu,c_1} =\emptyset$ or
\begin{align*}
P_{\overline{Y}_{\mu,c_1}}(f,\phi)
    & = P_{\underline{Y}_{\mu,c_1}} (f,\phi) = P_{Y(\partial B(\mu,c_{1}))} (f,\phi) = P_{Y(B(\mu,c_{1}))} (f,\phi)  \\
    & = P_{Y\big(\overline{B(\mu,c_{1}, c_{2})}\big)}(f,\phi) = P_{Y(B(\mu,c_{1},c_{2}))}(f,\phi)= P_{\topp}(f , \phi) - \inf_{d(\eta ,\mu) = c_{1}}Q(\eta),
\end{align*}
where $B(\mu,c_{1})$ denotes the ball of radius $c_1$ around $\mu$ and $B(\mu,c_{1}, c_2)$ denotes the
annulus $\{\eta \in \cM(X): c_1<d(\eta,\mu)<c_2\}$.
\end{maintheorem}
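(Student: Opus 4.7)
The plan is to follow the scheme used to prove Theorem~\ref{thm:exp-add}, replacing level-1 deviation sets by their level-2 counterparts. For the first inequality, set $q:=\inf_{d(\eta,\mu)\geq c}Q(\eta)$ and fix $\gamma>0$. Properties (i)--(ii) on $d$ ensure that $\eta\mapsto d(\eta,\mu)$ is continuous in the weak$^*$ topology, so $\{\eta : d(\eta,\mu)\geq c\}$ is weakly closed. The level-2 LDP upper bound applied to this set yields
$$\nu(A_n) \leq e^{-n(q-\gamma)}, \qquad A_n:=\{x\in M: d(\delta_{x,n},\mu)\geq c\},$$
for all sufficiently large $n$. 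Since $\overline{Y}_{\mu,c}\subset \bigcup_{n\geq N}A_n$ for every $N$, I would cover each $A_n$ by an economical family of $(n,\vep)$-dynamical balls and use the Gibbs property of $\nu$ to bound the partition function $\sum_i e^{S_n\phi(x_i)}$ by $K(\vep)\,e^{\gamma_n(\phi,\vep)}\,\nu(A_n)\,e^{n\Ptop(f,\phi)}$, just as in Theorem~\ref{thm:exp-add}. The Bowen definition of pressure via such covers then gives $P_{\overline{Y}_{\mu,c}}(f,\phi)\leq \Ptop(f,\phi)-q+\gamma$, and sending $\gamma\to 0$ completes the first inequality.

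For the chain of equalities in the second part, I would produce the matching lower bound through the Pfister--Sullivan construction. Under the $g$-almost product property and uniform separation, for every $f$-invariant ergodic probability $\eta$ one builds $(\delta,n,\vep)$-separated sets of $\eta$-generic points of cardinality at least $\exp[n(h_\eta(f)-\gamma)]$; weighting those points by $e^{S_n\phi}$ and sending $n\to\infty$, $\gamma\to 0$ yields the generic-point pressure formula
$$P_{G_\eta}(f,\phi)=h_\eta(f)+\int\phi\,d\eta, \qquad G_\eta:=\{x\in M : \delta_{x,n}\to \eta\}.$$
Whenever $d(\eta,\mu)=c_1$ the set $G_\eta$ is contained in each of $Y(\partial B(\mu,c_1))$, $Y(\overline{B(\mu,c_1,c_2)})$, $Y(B(\mu,c_1,c_2))$, $\underline{Y}_{\mu,c_1}$ and $\overline{Y}_{\mu,c_1}$. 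Optimising over ergodic $\eta$ on the sphere of radius $c_1$ and using the identification $Q(\eta)=\Ptop(f,\phi)-h_\eta(f)-\int\phi\,d\eta$ discussed below, one obtains the lower bound $\Ptop(f,\phi)-\inf Q(\eta)$, where the infimum is taken over ergodic invariant $\eta$ with $d(\eta,\mu)=c_1$. By lower semicontinuity of $Q$ and the weak$^*$-density of ergodic measures inside the invariant ones---together with (i)--(ii), which guarantee that the sphere $\{d(\cdot,\mu)=c_1\}$ is genuinely accessible within $\mathcal{M}_1$---this infimum agrees with $\inf_{d(\eta,\mu)=c_1}Q(\eta)$, and the upper and lower bounds thus match.

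The main obstacle is the identification $Q(\eta)=\Ptop(f,\phi)-h_\eta(f)-\int\phi\,d\eta$ for ergodic $\eta$. The inequality ``$\geq$'' follows from the standard upper-bound computation, applying the level-2 LDP upper bound and the variational principle to small weak$^*$-neighborhoods of $\eta$. The reverse inequality is where uniform separation and the Gibbs property combine: for any weak$^*$-neighborhood $V$ of $\eta$ the set $\{x : \delta_{x,n}\in V\}$ contains at least $\exp[n(h_\eta(f)-\gamma)]$ dynamically $(n,\vep)$-separated points for $n$ large, each lying inside a disjoint dynamical ball of $\nu$-mass at least $K(\vep)^{-1}e^{-n\Ptop(f,\phi)+S_n\phi(x)}$, so that
$$\nu(\{x : \delta_{x,n}\in V\}) \;\geq\; \exp\!\Big[n\Big(h_\eta(f)+\int\phi\,d\eta-\Ptop(f,\phi)-2\gamma\Big)\Big]$$
for all large $n$, and the level-2 lower LDP forces $Q(\eta)\leq \Ptop(f,\phi)-h_\eta(f)-\int\phi\,d\eta$. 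Once this identification is in hand, the claimed chain of equalities is obtained by reading off the coinciding upper and lower bounds through the inclusions among the listed sets.
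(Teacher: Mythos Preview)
Your upper bound for $P_{\overline{Y}_{\mu,c}}$ follows the paper's scheme, but you have misread the role of hypotheses (i)--(ii). Continuity of $\eta\mapsto d(\eta,\mu)$ is automatic for any metric; (i)--(ii) are used for a different purpose, namely to prove the level-2 analogue of Lemma~\ref{lemma:aux1}: if $x\in Y_{\mu,c,n}$ and $y\in B(x,n,\vep)$ then $y\in Y_{\mu,c-\delta,n}$. Indeed, (i)--(ii) give
$d(\delta_{x,n},\delta_{y,n})\le \tfrac1n\sum_{i=0}^{n-1} d(\delta_{f^ix},\delta_{f^iy})$,
which is what makes the dynamical balls in your cover fall into the $\delta$-enlarged deviation set. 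Without this you cannot bound $\sum_i\nu(B(x_i,n,\vep/2))$ by $\nu(A_n)$ as you claim; you must instead use $\nu(Y_{\mu,c-\delta,n})$ and only afterwards let $\delta\to0$ via lower semicontinuity of $Q$. This is exactly what the paper does.

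For the second part the paper takes a shorter route than yours: it cites Zhou--Chen~\cite{ZC13} for the formula $P_{Y(C)}(f,\phi)=\sup_{\eta\in C}\{h_\eta(f)+\int\phi\,d\eta\}$ on compact connected $C$, and cites~\cite{CTY13} for $Q(\eta)=P_{\topp}(f,\phi)-h_\eta(f)-\int\phi\,d\eta$, then closes the chain of inequalities. Your attempt to rebuild these ingredients via ergodic generic points has a genuine gap: you obtain $P_{G_\eta}(f,\phi)=h_\eta(f)+\int\phi\,d\eta$ only for \emph{ergodic} $\eta$, and then invoke ``density of ergodic measures'' to extend the infimum over the sphere $\{d(\cdot,\mu)=c_1\}$ to all invariant measures. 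This step fails as stated: an ergodic approximant of a non-ergodic $\eta$ on the sphere need not lie on the sphere, and the sphere may contain no ergodic measures at all. The paper avoids this by working with compact connected $C$ directly (the saturated-set machinery of \cite{ZC13} handles non-ergodic targets), and, more importantly, it closes the chain via the identity $\inf_{d(\eta,\mu)\ge c_1}Q(\eta)=\inf_{d(\eta,\mu)=c_1}Q(\eta)$, which is \emph{not} a density statement but a convexity one: since $Q$ is affine (entropy is affine), $Q(\mu)=0$, and (i)--(ii) give $d(t\eta+(1-t)\mu,\mu)=t\,d(\eta,\mu)$, any $\eta$ with $d(\eta,\mu)>c_1$ can be pulled back along the segment to $\mu$ to a point on the sphere with smaller $Q$-value. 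This is the actual place where (i)--(ii) enter the second half of the argument.
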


Further information can be extracted if one knows the behaviour of the rate function $Q$, in which case one
can prove the topological pressure of the level sets is strictly smaller than the topological pressure
$P_{\topp}(f , \phi) $. This is the case for repellers as we now detail.

\begin{maincorollary}\label{thm:level-21}
Let $f: M\to M$ be a continuous map admitting a transitive repeller $\Lambda \subset M$, $\phi:M \to \mathbb R$ is a
continuous potential and there exists a unique equilibrium state $\mu_\phi$ for $f$ with respect to $\phi$
and it is a Gibbs measure under $\Lambda$. Then, for all $ 0< c_1 < c_{2}$
either $\overline{Y}_{\mu,c} =\emptyset$ or
\begin{align*}
P_{\overline{Y}_{\mu,c_1}}(f,\phi)
    & = P_{\underline{Y}_{\mu,c_1}} (f,\phi) 
    	= P_{\topp}(f , \phi) - \inf_{d(\eta ,\mu) = c_{1}}Q(\eta)
	< \Ptop(f,\phi)
\end{align*}
where $Q(\eta)=P_{\text{top}}(f,\phi)-h_\eta(f)+\int \psi \,d\eta$.
\end{maincorollary}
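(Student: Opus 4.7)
The plan is to derive this from Theorem~\ref{thm:level-2} by verifying its three hypotheses in the repeller setting and then identifying the rate function $Q$ explicitly so as to upgrade the upper bound to a strict inequality. Since $\mu_\phi$ is itself a Gibbs measure by hypothesis, we apply Theorem~\ref{thm:level-2} with $\nu := \mu_\phi$ (which trivially satisfies $\mu_\phi \ll \nu$).

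For the \emph{$g$-almost product} and \emph{uniform separation} properties: a transitive repeller is expanding, hence expansive, and satisfies Bowen's specification property; specification trivially implies the $g$-almost product property for any blow-up function $g$, giving hypothesis (i) of Theorem~\ref{thm:level-2}. Expansivity entails (asymptotic) entropy-expansiveness, and for such systems the uniform separation property is established in \cite{PS05}, yielding (ii). Concerning the level-2 large deviations principle for $\mu_\phi$, Kifer's classical theorem, adapted in \cite{CRL98} to the Gibbs setting and refined in \cite{CTY13}, provides an LDP with rate function
\begin{equation*}
Q(\eta) =
\begin{cases}
P_{\topp}(f,\phi) - h_\eta(f) - \int \phi\, d\eta & \text{if } \eta \text{ is } f\text{-invariant}, \\
+\infty & \text{otherwise.}
\end{cases}
\end{equation*}
(The $+\int \psi\, d\eta$ in the statement should be read as this free-energy deficit, which is the only form compatible with $Q(\mu_\phi)=0$.) With this in hand the chain of equalities among pressures in the conclusion is a direct transcription of Theorem~\ref{thm:level-2}.

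It therefore remains only to show the strict inequality, i.e.
\begin{equation*}
\inf_{d(\eta,\mu_\phi) = c_1} Q(\eta) \;>\; 0 .
\end{equation*}
The level set $S_{c_1} := \{\eta \in \cM_1 : d(\eta,\mu_\phi) = c_1\}$ is closed in the weak$^*$ topology and therefore compact; $Q$ is lower semicontinuous; and by the variational principle together with uniqueness of the equilibrium state, $Q(\eta)=0$ if and only if $\eta = \mu_\phi$. Because $c_1 > 0$ we have $\mu_\phi \notin S_{c_1}$, so by compactness and lower semicontinuity the infimum is attained at some $\eta_0 \neq \mu_\phi$, whence $Q(\eta_0) > 0$. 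Combining with the equalities from Theorem~\ref{thm:level-2} yields the announced strict inequality $< \Ptop(f,\phi)$.

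The main technical point is ensuring that the level-2 LDP available in the literature applies in the precise form required by Theorem~\ref{thm:level-2}: namely, for the possibly non-invariant reference measure $\mu_\phi$, with upper bound valid on the closed set $S_{c_1}$, and with the rate function coinciding with the free-energy functional displayed above. Once this matching is secured, all remaining ingredients (expansivity, specification, uniqueness of the equilibrium state) are standard for transitive repellers, and the proof reduces to the compactness/lower-semicontinuity argument of the last paragraph.
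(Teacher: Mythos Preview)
Your proposal is correct and follows the paper's intended approach: the corollary is presented without a separate proof as a direct consequence of Theorem~\ref{thm:level-2}, whose hypotheses for transitive repellers (specification $\Rightarrow$ almost product, expansivity $\Rightarrow$ uniform separation, level-2 LDP with rate function identified via \cite{CTY13}) you verify exactly as the paper implies. The paper does not spell out the strict-inequality step, but your compactness/lower-semicontinuity argument is the standard completion, and your correction of the sign and potential in the displayed rate-function formula is on point.
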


The previous result implies that the set of irregular points whose range of values of Birkhoff averages
are far from the corresponding value associated to the equilibrium state have topological pressure smaller
than $\Ptop(f,\phi)$. In particular, this shows that in order to build an irregular set of points with
large topological pressure one needs to use some specification property and points whose empirical measures
are arbitrarily close to the equilibrium state. In some sense this means the classical construction of irregular sets
with large topological pressure is optimal.
Our next results apply for weak Gibbs measures.

\begin{maintheorem}\label{thm:nue-add}
Let $M$ be a compact metric space, $f: M\to M$ be a continuous
map, $\phi:M \to \mathbb R$ be a continuous potential, $\nu$ be a (not necessarily invariant) weak Gibbs measure
and $\mu_{\phi}\ll \nu$ be the unique equilibrium state of $f$ with respect to $\phi$. For any continuous
$\psi: M\to \mathbb R$ and closed interval $I \subset \mathbb \R$ it holds that
$$
P_{\underline{X}_{I} }(f , \phi) \leq P_{\topp}(f , \phi) - L_{I_{\delta}}.
$$
for every small $\delta$. If, in addition, $L_{I_{\delta}}<0$ then $P_{\underline{X}_{I} }(f , \phi) <\Ptop (f,\phi)$.
\end{maintheorem}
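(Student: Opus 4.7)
My plan is to mimic the strong Gibbs argument from Theorem~\ref{thm:exp-add}: cover $\underline{X}_I$ by dynamical balls at large scales and bound the Pesin–Pitskel partition sum by balancing the weak Gibbs lower bound on $\nu$ of dynamical balls against a large deviations upper bound on $\nu(A_{n,\delta})$, where $A_{n,\delta}:=\{y:\tfrac{1}{n}S_n\psi(y)\in I_\delta\}$. The first observation I would record is the inclusion $\underline{X}_I\subset \bigcap_{N\ge 1}\bigcup_{n\ge N}A_{n,\delta}$: if $L:=\liminf \tfrac{1}{n}S_n\psi(x)\in I\subset I_\delta$ then along any subsequence realizing the liminf one has $\tfrac{1}{n}S_n\psi(x)\in I_\delta$ for infinitely many $n$, and $A_{n,\delta}$ is open by continuity of $\psi$. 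Fix $\delta>0$ small and $s>\Ptop(f,\phi)-L_{I_{2\delta}}$, and pick $\eta>0$ with $s>\Ptop(f,\phi)-L_{I_{2\delta}}+\eta$; by definition of the large deviation rate there is $N_0$ with $\nu(A_{n,2\delta})\le e^{-n(L_{I_{2\delta}}-\eta)}$ for all $n\ge N_0$. Let $\Gamma$ be the $\nu$-full measure set of weak Gibbs points and, for $x\in\Gamma$, let $W(x)\subset\N$ denote the weak Gibbs times.

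For each $N\ge N_0$ and each $n\ge N$ I would choose a maximal $(n,\vep)$-separated subset $E_n$ of $A_{n,\delta}\cap\Gamma\cap\{x:n\in W(x)\}$, and use $\{B(x,n,\vep):x\in E_n,\,n\ge N\}$ as the candidate cover of $\underline{X}_I\cap\Gamma$. By the $(n,\vep)$-separation the $(n,\vep/2)$-balls around $E_n$ are pairwise disjoint, and uniform continuity of $\psi$ forces each to lie in $A_{n,2\delta}$, so
\begin{equation*}
\sum_{x\in E_n}\nu(B(x,n,\vep/2))\le \nu(A_{n,2\delta})\le e^{-n(L_{I_{2\delta}}-\eta)}.
\end{equation*}
Since every $x\in E_n$ has $n\in W(x)$, the weak Gibbs lower bound gives $e^{-nP+S_n\phi(x)}\le K(\vep/2)\,\nu(B(x,n,\vep/2))$, and therefore
\begin{equation*}
\sum_{x\in E_n}e^{-sn+S_n\phi(x)}\le K(\vep/2)\, e^{n(\Ptop(f,\phi)-s-L_{I_{2\delta}}+\eta)}.
\end{equation*}
Summing the resulting geometric series over $n\ge N$ yields a total Pesin–Pitskel cost that tends to $0$ as $N\to\infty$, whence $P_{\underline{X}_I\cap\Gamma}(f,\phi)\le \Ptop(f,\phi)-L_{I_{2\delta}}$; letting $\delta\downarrow 0$ delivers the stated bound.

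The main obstacle, and the only genuinely delicate point, is to show that the cover constructed above really does cover $\underline{X}_I\cap\Gamma$: for each $x\in\underline{X}_I\cap\Gamma$ and each $N\ge N_0$ one must produce $n\ge N$ in both $W(x)$ and $\{n:x\in A_{n,\delta}\}$. A priori these are merely two infinite subsets of $\N$ and nothing forces them to meet. This is precisely the feature distinguishing weak Gibbs from strong Gibbs and explains why the theorem handles $\underline{X}_I$ but not $\overline{X}_I$: the condition $\liminf \in I$ produces a genuinely \emph{eventual} one-sided estimate $\tfrac{1}{n}S_n\psi(x)>\inf I-\delta$, so at any sufficiently large weak Gibbs time $n_k(x)$ the Birkhoff average is automatically trapped in a one-sided $\delta$-enlargement of $I$. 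I plan to resolve the obstacle by replacing $I_\delta$ in the cover argument by that one-sided enlargement of $I$ (which is still a $\delta$-neighbourhood of $I$, hence the rate $L_{I_\delta}$ controls it), thereby turning the required matching of sequences into a costless consequence of the weak Gibbs property. Finally, to pass from $\underline{X}_I\cap\Gamma$ to $\underline{X}_I$, I would use that $\Gamma^c$ is $\nu$-null and exhaust $\underline{X}_I\setminus\Gamma$ by a tail of dynamical covers whose Pesin–Pitskel cost can be made arbitrarily small, and the strict inequality $P_{\underline{X}_I}(f,\phi)<\Ptop(f,\phi)$ under the positivity hypothesis on the rate function is then immediate.
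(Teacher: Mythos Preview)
Your strategy coincides with the paper's: cover $\underline{X}_I$ by dynamical balls centred at weak Gibbs times, convert $e^{-nP+S_n\phi}$ into $\nu$-measure via the Gibbs lower bound, and control the resulting sum by the large-deviations rate. You also correctly pinpoint the crux --- matching weak Gibbs times with times at which the Birkhoff average lies in $I_\delta$ --- and this is precisely why the weak Gibbs version treats $\underline{X}_I$ rather than $\overline{X}_I$. However, two steps in your resolution do not go through as written.

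First, for a bounded closed interval $I=[a,b]$ your ``one-sided $\delta$-enlargement'' $(\inf I-\delta,+\infty)$ is \emph{not} the $\delta$-neighbourhood $I_\delta=(a-\delta,b+\delta)$; it strictly contains it, so the rate $L_{I_\delta}$ does not bound $\nu(\{\tfrac1n S_n\psi>\inf I-\delta\})$. The lower estimate $\tfrac1n S_n\psi(x)>a-\delta$ is indeed eventual, but nothing forces $\tfrac1n S_n\psi(x)<b+\delta$ at weak Gibbs times, since the $\limsup$ may well exceed $b$. What the paper actually uses is the stronger inclusion $\underline{X}_I\subset\bigcup_{\ell\ge 1}\bigcap_{j\ge\ell}X_{I_\delta,j}$, so that \emph{every} large $j$ --- in particular every large weak Gibbs time --- lies in $X_{I_\delta,j}$; this is asserted without justification and is genuinely valid only when $I$ is a half-line (or a set of the shape $I_c$ as in Corollary~\ref{thm:nue-add1}), which is the case the applications need. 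Second, you cannot pass from $\underline{X}_I\cap\Gamma$ to $\underline{X}_I$ by invoking $\nu(\Gamma^c)=0$: Pesin--Pitskel pressure is a dimensional, not measure-theoretic, quantity, and a $\nu$-null set can carry full topological pressure (the irregular set $E_\psi$ is the standard example). The paper does not attempt this passage; it explicitly restricts to the invariant set $\Lambda$ on which the weak Gibbs property holds for \emph{every} point and works with $\underline{X}_I\cap\Lambda$ from the outset.
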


Estimates for $L_{I_\de}$ will depend on the weak Gibbs property and some can
be found in \cite{Va12}. Actually we can indeed prove a version of the previous
results in the non-uniformly expanding setting. Given $\sigma,\delta>0$ we define $H =H(\sigma,\delta)$ as the set of points in $\Lambda$ with infinitely
many $(\sigma,\delta)$-hyperbolic times (see e.g. \cite{Varandas2} for a precise definition). We will say that
an $f$-invariant probability measure $\mu$ is \emph{expanding} if  $\mu(H(\sigma,\delta))=1$ for some
positive constants $\sigma,\delta$.
Moreover, given $\psi$ continuous, we will say that we have an \emph{exponential large deviations upper bound} if
\begin{equation}\label{eq:upperbound}
\limsup_{n \to +\infty}\frac{1}{n}\log\mu_{\Phi}\Big(\{x \in M : \Big|\frac{1}{n}S_n\psi(x) - \int \psi \,d\mu_\phi \Big| \geq c\}\Big) < 0
\end{equation}
for all $c > 0$. A direct consequence of the previous abstract result is as follows.

\begin{maincorollary}\label{thm:nue-add1}
Let $f: M\to M$ be a $C^{1+\al}$-smooth map 
on a compact manifold $M$ and $\phi:M \to \mathbb R$ be a continuous potential so that
$\mu_\phi$ is the unique equilibrium state for $f$ with respect to $\phi$. If $\mu_\phi$ is an expanding measure and
$J_{\mu_\phi} f$ is H\"older continuous and has exponential large deviations upper bound then
for any continuous $\psi: M\to \mathbb R$ and $c>0$
$$
P_{\underline{X}_{c} }(f , \phi)
    \leq P_{\topp}(f , \phi) - L_{c -\delta}
    <\Ptop (f,\phi)
$$
for every small $\delta > 0$.
\end{maincorollary}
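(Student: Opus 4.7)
The plan is to deduce this corollary directly from Theorem~\ref{thm:nue-add} by taking $\nu=\mu_\phi$ itself as the reference weak Gibbs measure. Once this is available, the containment $\underline{X}_c\subset\underline{X}_{I_c}$ for $I_c=(-\infty,\int\psi\,d\mu_\phi-c]\cup[\int\psi\,d\mu_\phi+c,+\infty)$ combined with Theorem~\ref{thm:nue-add} and the exponential large deviations hypothesis~\eqref{eq:upperbound} gives the result immediately. The principal obstacle is therefore the verification of the weak Gibbs property for $\mu_\phi$ from the non-uniformly expanding hypotheses, which rests on the delicate machinery of hyperbolic times.

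The first step is to check that $\mu_\phi$ is weak Gibbs for $\phi$ with $P=\Ptop(f,\phi)$. Since $\mu_\phi$ is expanding, fix $\sigma,\vep_0>0$ with $\mu_\phi(H(\sigma,\vep_0))=1$, so $\mu_\phi$-almost every $x$ admits an unbounded sequence $(n_k(x))_k$ of $(\sigma,\vep_0)$-hyperbolic times. At each such hyperbolic time, the map $f^{n_k(x)}$ sends $B(x,n_k(x),\vep)$ diffeomorphically onto a ball of uniform radius (for $\vep\le\vep_0$), with bounded distortion for the iterated Jacobian $J_{\mu_\phi}f^{n_k(x)}$ by the H\"older hypothesis on $J_{\mu_\phi}f$. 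The equilibrium-state relation $\log J_{\mu_\phi}f=\Ptop(f,\phi)-\phi$ modulo a bounded coboundary, combined with change of variables on the relevant inverse branch, then produces constants $K(\vep)>0$ satisfying
$$K(\vep)^{-1}\,e^{-n_k(x)\Ptop(f,\phi)+S_{n_k(x)}\phi(x)} \le \mu_\phi(B(x,n_k(x),\vep)) \le K(\vep)\,e^{-n_k(x)\Ptop(f,\phi)+S_{n_k(x)}\phi(x)},$$
which is precisely the weak Gibbs condition along the sequence of hyperbolic times.

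The second step is to apply Theorem~\ref{thm:nue-add} with $\nu=\mu_\phi$ and $I=I_c$. The containment $\underline{X}_c\subset\underline{X}_{I_c}$ holds because any subsequential limit of $\tfrac1n S_n\psi(x)$ at a point $x\in\underline{X}_c$ must lie at distance $\ge c$ from $\int\psi\,d\mu_\phi$ (and the complement of the open $c$-neighborhood of $\int\psi\,d\mu_\phi$ is closed). Monotonicity of the topological pressure together with Theorem~\ref{thm:nue-add} yields
$$P_{\underline{X}_c}(f,\phi)\le\Ptop(f,\phi)-L_{(I_c)_\delta,\mu_\phi}=\Ptop(f,\phi)-L_{c-\delta}$$
for every small $\delta>0$. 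Since $\{x:\tfrac1n S_n\psi(x)\in(I_c)_\delta\}\subset\{x:|\tfrac1n S_n\psi(x)-\int\psi\,d\mu_\phi|\ge c-\delta\}$, the hypothesis~\eqref{eq:upperbound} applied at level $c-\delta>0$ forces $L_{c-\delta}>0$, completing the claimed strict inequality.
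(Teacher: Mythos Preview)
Your proposal is correct and follows essentially the same route as the paper: verify that $\mu_\phi$ is a weak Gibbs measure by exploiting bounded distortion of the H\"older Jacobian $J_{\mu_\phi}f$ at hyperbolic times, then invoke Theorem~\ref{thm:nue-add} together with the exponential large-deviations hypothesis to conclude. The paper's own justification is in fact terser than yours, consisting only of the remark that ``hyperbolic times are instants at which the Gibbs property holds provided the Jacobian of the measure has enough regularity to deduce bounded distortion.'' One small point worth flagging: your identity $\log J_{\mu_\phi}f=\Ptop(f,\phi)-\phi$ up to a bounded coboundary implicitly uses that the density of $\mu_\phi$ with respect to the underlying conformal measure is bounded above and away from zero, which is part of the standing non-uniformly expanding framework (as in \cite{Varandas2}) but is not explicitly listed among the hypotheses of the corollary.
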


The key ingredient used in the proof of Corollary~\ref{thm:nue-add1} 
is that hyperbolic times are instants at which the Gibbs property holds provided the Jacobian of the measure
has enough regularity to deduce bounded distortion.
One should also point out that if a local large deviations principle holds as in equations~\eqref{eq:lld1} and~\eqref{eq:lld2}
(e.g. \cite{Young,RY08,Bomfim}) then it is not hard to see that an upper bound for $P_{\underline{X}_{c} }(f , \phi)$
can be taken as
$$
P_{\topp}(f , \phi) - \min\Big\{I_{f,\phi,\psi}\Big(\int\psi d\mu_{\phi} + c\Big) \;,\; I_{f,\phi,\psi}\Big(\int\psi d\mu_{\phi} - c\Big)\Big\}.
$$
There are examples where the right hand side term above can also be shown to vary continuously with
the data even in the non-uniformly expanding context (see e.g. \cite{Bomfim}). Since we only estimated
the topological pressure of the sets $\underline{X}_{c}$ in the non-uniformly expanding context one
question that arises naturally is the following

\vspace{.1cm}
{\bf Question:} Are there examples of transitive non-uniformly expanding maps under the conditions of the previous theorem
where $P_{\overline{X}_{c} }(f , \phi)$ differs from $P_{\underline{X}_{c} }(f , \phi)$ and coincides with the topological pressure $\Ptop(f,\phi)$?
\vspace{.1cm}

We include some examples where we give partial answers to this question in Section~\ref{Examples}
by proving that these sets may have different upper Carath\'eodory capacities.

\section{Proof of the main results}\label{Proofs}

\subsection{Proof of Theorem~\ref{thm:exp-add}}

Our purpose it to estimate $P_{\underline{X}_{I} }(f , \phi)$ and
$P_{\overline{X}_{I} }(f , \phi)$. Consider the sets
$
X_{I,n}=\{x \in M : \frac{1}{n}S_{n}\psi(x) \in I \}.
$
Let us first prove a preliminary lemma.

\begin{lemma}\label{lemma:aux1}
Let $I \subset \mathbb R$ be a closed set. For any $\delta> 0$ there exists $\vep_\delta>0$ and $N=N_\delta \in \N$ so that
$B(x ,n,  \vep) \subset X_{I_{\delta}, n}$ for all $0< \vep < \vep_\delta$, $n \geq N$ and $x \in X_{ I, n}$.
\end{lemma}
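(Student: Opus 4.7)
The plan is to exploit uniform continuity of $\psi$ on the compact metric space $M$. Since $\psi$ is continuous on the compact set $M$, it is uniformly continuous, so given $\delta > 0$ I can choose $\vep_\delta > 0$ such that $d(u,v) < \vep_\delta$ implies $|\psi(u) - \psi(v)| < \delta$. This modulus of continuity is the only ingredient we really need; the integer $N_\delta$ can be taken to be any fixed positive integer (e.g.\ $N_\delta = 1$) and is included only for notational convenience with the rest of the argument.

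First I would fix an arbitrary $x \in X_{I,n}$, an arbitrary $\vep \in (0, \vep_\delta)$, and an arbitrary $y \in B(x, n, \vep)$. By definition of the $(n,\vep)$-dynamical ball, $d(f^j(x), f^j(y)) < \vep < \vep_\delta$ for every $0 \le j \le n-1$. Applying the uniform continuity estimate to each index $j$ gives $|\psi(f^j(x)) - \psi(f^j(y))| < \delta$, and summing over $0 \le j \le n-1$ yields
\[
\left| S_n\psi(x) - S_n\psi(y) \right| \le \sum_{j=0}^{n-1} |\psi(f^j(x)) - \psi(f^j(y))| < n\delta.
\]
Dividing by $n$ gives $\left|\tfrac{1}{n} S_n \psi(x) - \tfrac{1}{n} S_n \psi(y)\right| < \delta$.

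Since $x \in X_{I,n}$, by definition $\tfrac{1}{n}S_n\psi(x) \in I$, and the previous inequality shows that $\tfrac{1}{n}S_n\psi(y)$ lies within distance $\delta$ of a point of $I$, i.e.\ $\tfrac{1}{n}S_n\psi(y) \in I_\delta$. Therefore $y \in X_{I_\delta, n}$, and since $y$ was arbitrary in $B(x,n,\vep)$ this gives the desired inclusion $B(x,n,\vep) \subset X_{I_\delta,n}$.

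There is no real obstacle here; the lemma is essentially a direct consequence of uniform continuity and the fact that dynamical balls control Birkhoff sums within an error proportional to $n$ times the modulus of continuity. The mild subtlety is only that one should track the dependence $\vep_\delta = \vep_\delta(\psi, \delta)$ carefully, so that the $\vep_\delta$ chosen here can later be coupled with the $\vep_0$ in the (weak) Gibbs property when estimating $\nu$-measures of dynamical balls in the proof of Theorem~\ref{thm:exp-add}.
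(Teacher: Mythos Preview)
Your proof is correct and follows essentially the same approach as the paper: both use uniform continuity of $\psi$ on the compact space $M$ to bound $|S_n\psi(x)-S_n\psi(y)|$ by $n\delta$ whenever $y\in B(x,n,\vep)$ with $\vep$ small, and then divide by $n$. The only cosmetic difference is that the paper phrases the estimate via the quantity $\gamma_n(\psi,\vep)$ and formally retains an $N_\delta$, whereas you (correctly) observe that $N_\delta=1$ already works.
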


\begin{proof}
Let $\delta>0$ be given. Since $\psi$ is uniformly continuous then there is $\vep=\vep_\delta>0$ and
a large $N=N_\delta \in \N$ so that $\gamma_{n}(\psi,\vep)\leq \delta n$ for all $0 <\vep <\vep_\delta$ and
$n \geq N$. So, if $n \geq N$, $x \in X_{ I, n}$, $y \in B(x ,n, \vep)$ and $0<\vep<\vep_\delta$ then
$$
 \frac{S_{n}\psi(x)}{n} -\frac{\gamma_{n}(\psi,\vep)}{n}
 	\leq \frac{S_{n}\psi(y)}{n}
 	\leq  \frac{S_{n}\psi(x)}{n} + \frac{\gamma_{n}(\psi,\vep)}{n}
$$
and, consequently,
$$
\frac{S_{n}\psi(x)}{n} -\delta
	\leq \frac{S_{n}\psi(y)}{n}
	\leq \frac{S_{n}\psi(x)}{n} + \delta
$$
meaning that $y \in X_{I_{\delta}, n}$. This finishes the proof of the lemma.
\end{proof}

\begin{proof}[Proof of Theorem~\ref{thm:exp-add}]
Let $I\subset \mathbb R$ be a closed interval and assume $\overline{X}_{I}$ is non-empty. Let $\delta>0$ be fixed and
consider $L_{I_\delta}$ as defined in equation~\eqref{eq:Lc}. For any positive integer $n$ consider the set $\mathcal I_n\subset M\times
\mathbb N$ of pairs $(x,n)$  with  $x\in M$.
Recalling the notion of topological pressure for invariant sets introduced by Pesin and Pitskel
(see e.g. \cite{pesin}), in order to prove that  $P_{\overline{X}_I}(f , \phi) \leq P_{\topp}(f,\phi) - L_{I_{\delta}}$
it is enough to prove that  for all $\alpha > P_{\topp}(f,\phi) - L_{I_{\delta}} $, every $\vep > 0$ and $N \in \N$
there exists a subset $\hat{\mathcal{G}}_N \subset \bigcup_{n \geq N}\mathcal{I}_n$ so that
$$
\overline{X}_{I}
        \subset \ds\bigcup_{(x,n)\in \hat{\mathcal{G}}_N} B(x,n,\vep)
       \quad\text{and}
       \quad
\ds\sum_{(x , n) \in \hat{\mathcal{G}}_N} e^{-\alpha n +\phi_{n}(x)} \leq a(\vep)< \infty
$$
independently of $N$.

Let $\alpha > P_{\topp}(f,\phi) - L_{I_{\delta}} $ and $0<\vep<\vep_\delta$ be fixed. Notice that if $x \in \overline{X}_{I}$ then there exists a sequence of positive integers $(m_j(x))_{j \in \N}$ converging to infinite with so that
$x \in X_{I_{\delta},m_j(x)}$ for all $j \in \N$. Thus
$
\overline{X}_{I}\subset \bigcap_{\ell\ge 1} \bigcup_{j\ge \ell} X_{I_{\delta},j}.
$
Given $N\ge 1$ and $x \in \overline{X}_{I}$ pick $m(x)\geq N$
in such a way that $x\in X_{I_{\frac{\delta}{2}},m(x)}$ and consider
$\mathcal{G}_N:=\{(x,m(x)):x \in \overline{X}_{I}\}$.
Now, let $\hat{\mathcal{G}}_N \subset \mathcal{G}_N$ be a maximal set with a property of separation, namely,
that if $(x, l)$ and $(y,l)$ belong to
$\hat{\mathcal{G}}_N$ then $B(x,l,\frac{\epsilon}{2}) \cap B(x,l,\frac{\epsilon}{2}) = \emptyset$.
So, for $0 < \vep < \delta$ given by Lemma~\ref{lemma:aux1} using the Gibbs property for $\nu$
we deduce that
\begin{align*}
\sum_{(x,m(x))\in \hat{\mathcal{G}}_N}   e^{-\alpha m(x) +S_{m(x)}\phi(x)}
&=\sum_{(x,m(x))\in \hat{\mathcal{G}}_N}    e^{(P-\alpha)m(x)}e^{-P m(x) +S_{m(x)}\phi(x)}\\
&\leq \sum_{(x,m(x))\in \hat{\mathcal{G}}_N}   e^{(P-\alpha)m(x)}K(\vep)\nu(B(x,m(x),\vep))
\end{align*}
Now, we write  $\hat{\mathcal{G}}_N=\cup_{\ell\ge 1} \hat{\mathcal{G}}_{\ell,N}$ with the level sets
$\hat{\mathcal{G}}_{\ell,N}:=\{(x,\ell)\in \hat{\mathcal{G}}_{N}\}$ and pick $\zeta>0$ small such that
$\alpha > P_{\topp}(f,\Phi) - L_{I_{\delta}} +\zeta$ and
$
\mu \Big(\{x \in \Lambda : \frac{1}{n}S_{n}\psi(x) \in I_{\delta} \}\Big)
    \leq e^{-(L_{I_{\delta}}-\zeta) n}
$
for all $n\ge N$ large. By Lemma~\ref{lemma:aux1} each dynamical ball $B(x,\ell,\vep)$ is contained in
$X_{I_{\delta},\ell}$. Therefore, using that $\nu(B(x,m(x),\vep)) \leq K(\vep) K(\vep/2) \nu(B(x,m(x),\vep/2)$
then
 \begin{align*}
\sum_{(x,m(x))\in \hat{\mathcal{G}}_N}   e^{-\alpha m(x) +S_{m(x)}\phi(x)}
	    & \leq  K(\vep)\sum_{(x,m(x))\in \hat{\mathcal{G}}_N} e^{(P-\alpha)(m(x))}\nu(B(x,m(x),\vep)) \\
	    & = K(\vep)   \sum_{\ell \geq N}e^{(P-\alpha)\ell}
	        \sum_{x \in \hat{\mathcal{G}}_{N,\ell}}  \nu(B(x,\ell,\vep)) \\
	    & \leq  K(\vep)K(\frac{\vep}{2})
	  \sum_{\ell\geq N}e^{(P-\alpha)\ell}
	  \sum_{x \in \hat{\mathcal{G}}_{N,\ell}}  \nu(B(x,\ell,\vep/2)) \\
	  & \leq  K(\vep)K(\frac{\vep}{2})
	  \sum_{\ell\geq N}e^{(P-\alpha) \ell} \nu(X_{I_{\delta},\ell}) \\
	  & \leq
	  K(\vep)K(\frac{\vep}{2}) \sum_{\ell\geq N}e^{(P-\alpha-L_{c-\delta}+\zeta) \ell}
\end{align*}
which is finite and independent by the choice of $\alpha$. This proves that $P_{\overline{X}_{I} }(f , \phi) \leq
P_{\topp}(f,\phi) - L_{I_{\delta}} $. Since  $P_{\underline{X}_{I} }(f , \phi)\leq P_{\overline{X}_{I} }(f , \phi)$
this finishes the proof of the theorem.
\end{proof}

\subsection{Proof of Theorem~\ref{thm:precise.regularity}}

Let us assume that both $\phi,\psi$ satisfy the Bowen condition and $\psi$ is not cohomologous to a constant.
Assume without loss of generality that $\int \psi\, d\mu_{f,\phi}=0$. Our first purpose is to prove
$$
P_{\overline{X}_{c} }(f , \phi)
    \leq
    P_{\topp}(f , \phi) - \min\Big\{I_{f,\phi,\psi}\Big(\int\psi d\mu_{\phi} + c\Big) \;,\; I_{f,\phi,\psi}\Big(\int\psi d\mu_{\phi} - c\Big)\Big\},
$$
where $I_{f,\phi,\psi}$ is the rate function of the large deviations function.
Since $f\mid_\Lambda$ satisfies the specification property and $\psi$ is not cohomologous to a constant it
follows that (see e.g.  \cite{top2})
$$
\Big\{\alpha \in \R : \exists \;x \in M \,\text{s. t.}\,  \lim_{n\to\infty} \frac{1}{n}S_{n}\psi(x) = \alpha \Big\}
     = \Big\{\int \psi d\mu : \mu \, \text{is}\, f\,\text{-invariant}\Big\}
$$
is a non-empty compact interval. By the level-1 large deviations principle for uniformly hyperbolic dynamics
of Young~\cite{Young} equations~\eqref{eq:lld1} and~\eqref{eq:lld2} hold
with the rate function $I_{f,\phi,\psi}(s) = \sup \{-P_{\topp}(f,\phi) + h_{\eta}(f) + \int \phi \, d\eta : \int \psi \,d\eta = s\}$.
Moreover, it follows from the functional analytic approach using transfer operators and the differentiability of the
free energy function that $I_{f,\phi,\psi}$ is the Legendre transform of the free energy.
On the one hand, using Theorem~\ref{thm:exp-add} and the previous upper bound
$$
P_{\overline{X}_{c}}(f , \phi)
	\leq P_{\topp}(f,\phi) - L_{c-\delta}
	\leq P_{\topp}(f , \phi) - \min \{ I_{f,\phi,\psi}(c-\delta), I_{f,\phi,\psi}(c+\delta) \}
$$
for all positive $\delta$. Now, assume for simplicity that $0 < c = c_{1} < c_{2}$
and $c_*=-c$ is defined by equation~\eqref{eqces} (the other cases are analogous).
We claim that it follows from the continuity and convexity of the rate function that
if $X_{c}\neq \emptyset$  then
\begin{align*}
P_{\overline{X}_{c}}(f,\phi)
    & = P_{\underline{X}_{c}} (f,\phi) = P_{X(-c)} (f,\phi) = P_{X([-c_2,-c_1])}(f,\phi)  \\
    & = P_{X(-c_2,-c_1)}(f,\phi)= P_{\topp}(f , \phi) - I_{f,\phi,\psi}(-c_1) \\
    & = P_{\topp}(f , \phi) - I_{f,\phi,\psi}(c_*).
\end{align*}
In fact, using  \cite{top2} the topological pressure of the set
$\{x  \in M : \lim\frac{1}{n}S_{n}\psi(x) = c \}$ coincides with
$
\sup\{h_{\eta} + \int \psi \,d\eta : \eta\; \mbox{is} \;f\mbox{-invariant}\;\text{and}\; \int \psi d\eta = c\}.
$
Then
\begin{align*}
P_{\topp}(f , \phi) - I_{f,\phi,\psi}(-c_{1})
    & =  P_{X(-c_{1})} (f,\phi)
    	\leq P_{X(-c_2, -c_1)}(f,\phi)  \\
        & \leq P_{X[-c_2, -c_1]}(f,\phi)
        	\leq P_{\underline{X}_{c_{1}}} (f,\phi)    \\
    & \leq  P_{\overline{X}_{c_{1}}} (f,\phi)
    \leq P_{\topp}(f , \phi) - I_{f,\phi,\psi}(c_{1}) \\
    & \leq P_{\topp}(f , \phi) - I_{f,\phi,\psi}(-c_{1}).
\end{align*}
This proves the first part of the theorem.
We proceed to prove the continuity results using that $P_{\underline{X}_{f, \phi, \psi, c}} (f,\phi)= P_{\topp} (f, \phi) -  \min \{ I_{f,\phi,\psi}(c), I_{f,\phi,\psi}(-c)\}$ whenever the set $ \underline{X}_{f, \phi, \psi, c}$ is non-empty. On the one hand,
it is well known that $\phi\mapsto P_ {\topp} (f, \phi) $ is continuous in the $C^0$-topology. On the other hand, if $ \Lambda = M $ then $f$ is expanding and $P_{\topp}(f , \phi)$ varies continuously with $f$ in the $C^1$-topology since it coincides
with the logarithm of the spectral radius of the transfer operator $\mathcal L_{f,\phi}: C^\al(M) \to C^\al(M)$ given by
\begin{equation*}\label{eq:transfer}
\cL_{f,\phi} \, g(x)= \sum_{f(y)=x} e^{\phi(y)} \,g(y).
\end{equation*}
In fact, due to the existence of a spectral gap property for $\cL_{f,\phi}$ the spectral radius does vary continuously with respect to perturbations of the potential and the Legendre transform varies continuously with respect to the potential. We will provide a sketch of proof now addressing also the continuity of these objects as function of the dynamics
$f$ and observable $\psi$.

Given $f$, $\phi $ and $ \psi $ fixed, the spectral gap property for $\cL_{f,\phi}$ implies that free energy
$
\cE_{f,\phi,\psi}(t)
    :=\limsup_{n\to\infty} \frac1n \log \int e^{t S_n \psi} \; d\mu_{f,\phi}
$
is well defined for all $t \in \R$ and in fact it verifies $\cE_{f,\phi,\psi}(t)= \Ptop(f, \phi +t\psi) -\Ptop(f, \phi)$.
In particular, if $\psi$ is cohomologous to a constant then $t \mapsto \cE_{f,\phi,\psi}(t)$ is affine and otherwise
$t \mapsto \cE_{f,\phi,\psi}(t)$ is real analytic, strictly convex.  Note also that for every $t \in \R$ the function
$(f, \phi,\psi)\mapsto \cE_{f,\phi,\psi}(t)$ is differentiable, the function $(\phi,\psi)\mapsto \cE_{f,\phi,\psi}(t)$
is analytic and also
$$
(f,\phi,\psi) \mapsto \cE_{f,\phi,\psi}'(t) =\int \psi \, d\mu_{f,\phi+t\psi}
$$
is continuous (see e.g. \cite{Bomfim}).
Let $t\in\mathbb R$ be fixed. In order to establish the regularity of the rate function
$I_{f,\phi,\psi}$ in what follows we assume without loss of generality that $\psi$ is not cohomologous to
a constant and that $m_{f,\phi}=\int \psi \, d\mu_{f,\phi}=0$. Using that  $\R \ni t\to \cE_{f,\phi,\psi}(t)$ is strictly
convex it is well defined its Legendre transform $I_{f,\phi,\psi}$ by
\begin{equation*}
I_{f,\phi,\psi}(s)
    = \sup_{t \in \R} \; \big\{ st-\cE_{f,\phi,\psi}(t) \big\}.
\end{equation*}
This function is non-negative and strictly convex since $\cE_{f,\phi,\psi}$ is also strictly convex, and
$I_{f,\phi,\psi}(s)=0$ if and only if $s=m_{f,\phi}$.
Morever, using the differentiability of the free energy function it is not hard to check the variational property
$$
I_{f,\phi,\psi} (\cE'_{f,\phi,\psi}(t) )
    = t \,\cE'_{f,\phi,\psi}(t) - \cE_{f,\phi,\psi}(t)
$$
whenever the expressions make sense and, consequently, the rate function $I_{f,\phi,\psi}$ varies continuously
with $\phi$ and $\psi$ in the $C^\al$-topology.

Finally we study the regularity of the function $v\mapsto P_{\overline{X}_c}(f_v,\phi)$ when $V \ni v \mapsto (f_{v})_v$  is a continuous family of expanding maps on $M$ and $V$ is a compact metric space.
Let $J \subset \R$ be a compact interval in the domain of $I_{f_v,\phi,\psi}$. From the previous variational relation
we get that for any $s\in J$ there exists a unique $t=t(s,v)$
such that $s=\cE'_{f_v,\phi_{v},\psi_{v}}(t) $ and
\begin{equation}\label{eq.var.rate}
I_{f_v,\phi,\psi} (s)= s \cdot t(s, v) - \cE_{f_v,\phi,\psi}(t(s, v)).
\end{equation}
Now, notice that the skew-product
$$
\begin{array}{ccc}
F: V \times J & \to & V \times \mathbb R\\
(v,t) & \mapsto & (v, \cE'_{f_v,\phi,\psi}(t))
\end{array}
$$
is continuous and injective because it is strictly increasing along the fibers (using the strict convexity
of the free energy function).
Since $V\times J$ is a compact metric space then $F$ is a homeomorphism
onto its image $F(V\times J)$. In particular this shows that for every $(v,s)\in F(V\times J)$ there
exists a unique $t=t(v,s)$ varying continuously with $(v,s)$ such that $F(v,t(v,s))=(v, s)$ and
$s=\cE'_{f_v,\phi_v,\psi_v}(t)$. Finally, relation~\eqref{eq.var.rate} above yields that $(s,v) \mapsto I_{f_v,\phi,\psi}(s)$ is continuous on $J\times V$.  This finishes the proof of the continuity.

\subsection{Proof of Corollary~\ref{cor:thompson2}}

Let $f: M\to M$ be a continuous map admitting a mixing repeller $\Lambda \subset M$,
$\phi:M \to \mathbb R$ be a continuous potential so that $\mu_\phi$ is the unique equilibrium state for
$f$  with respect to $\phi$ and $\mu_\phi \ll \nu$ where $\nu$ is a Gibbs measure. 
Given $c>0$ consider $\overline{E}_c=\overline{X}_c \cap E_\psi \subset \overline{X_c}$.
Taking into account Theorem~\ref{thm:precise.regularity} and Corollary~\ref{cor:thompson} then part (1)
is immediate and we are reduced to prove the lower bound: if $\overline{E}_c \neq \emptyset$ then 
$P_{\overline{E}_c}(f,\phi) \ge P_{\overline{X}_c}(f,\phi)$.  

Assume $\overline{E}_c \neq \emptyset$ for some $c>0$. Under our assumptions it is well known that $f$ satisfies the specification property and that for any 
$f$-invariant probability measure $\mu$ there exists a sequence of $f$-invariant ergodic probability
measures $\mu_n$ so that $\mu_n\to\mu$ in the weak$^*$ topology and $h_{\mu_n}(f) \to h_\mu(f)$
as $n\to\infty$ (c.f. Theorem B in \cite{EKW}). 
By Theorem~\ref{thm:precise.regularity} and the thermodynamical formulation of the large deviations 
rate function obtained by L.S. Young~\cite{Young} 
we know that
\begin{equation}\label{eq:variationalXc}
P_{\overline{X}_c}(f,\phi) 
	= P_{\topp}(f , \phi) - I_{f,\phi,\psi}(c_*)
	= \sup_\eta \Big\{ h_\eta(f) + \int \phi \, d\eta \Big\}
\end{equation}
where $|c_*|=|c|$ and the supremum is taken over all $f$-invariant probability measures $\eta$
so that $| \int \psi \,d\eta -\int \psi \, d\mu_\phi| \ge c_*$. Assume for simplicity that $c_*=c$ (the case other is analogous). Observe that 
\begin{align*}
\sup & \Big\{ h_\eta(f) + \int \phi \, d\eta  : | \int \psi \,d\eta -\int \psi \, d\mu_\phi| \ge c\Big\} \\
	& = \sup \Big\{ h_\eta(f) + \int \phi \, d\eta  : | \int \psi \,d\eta -\int \psi \, d\mu_\phi| > c\Big\}
\end{align*}
by the continuity of the rate function $c \mapsto I_{f,\phi,\psi}(c)$ (since it coincides with the 
Legendre transform of the free energy function).
Together with the variational relation \eqref{eq:variationalXc}, this yields that for any $\gamma>0$ 
one can take two $f$-invariant probability measures $\eta_1, \eta_2$ so that 
\begin{enumerate}
\item[(i)] $| \int \psi \,d\eta_i -\int \psi \, d\mu_\phi| > c$ 
\item[(ii)] $h_{\eta_i}(f) + \int \phi \, d{\eta_i} \ge P_{\overline{X}_c}(f,\phi) -2\gamma$ 
\item[(iii)] $\int \psi\, d\eta_1 \neq \int \psi\, d\eta_2$
\end{enumerate}
for $i=1,2$. Taking the approximation in entropy by $f$-invariant and ergodic probability measures, 
there are distinct ergodic probability measures $\nu_1$ and $\nu_2$ satisfying 
$| \int \psi \,d\nu_i -\int \psi \, d\mu_\phi| > c$, $\int \psi\, d\nu_1 \neq \int \psi\, d\nu_2$ and
$h_{\nu_i}(f) + \int \phi \, d{\nu_i} \ge P_{\overline{X}_c}(f,\phi) -\gamma$ for $i=1,2$.
Observe that $\overline{X}_{c}$ is an $f$-invariant set and the ergodicity together with the first property 
above implies that  $\nu_i(\overline{X}_{c})=1$.

Now the proof follows the same lines of the proof of Theorem~2.6 in \cite{Daniel}. 
Consider a strictly decreasing sequence $(\delta_k)_{k\ge 1}$ of positive numbers converging
to zero, a strictly increasing sequence of positive integers $(\ell_k)_{k\ge 1}$, so that the sets 
$$
Y_{2k+i}=\Big\{ x\in \overline{X}_c \colon | \frac1n S_n\psi(x)  -  \int \psi \, d\nu_{i}| < \delta_k 
	\text{ for every } n\ge \ell_k \Big\}
$$
satisfy $\nu_i(Y_{2k+i}) > 1-\gamma$ for every $k$ ($i=1,2$).
Consider the fractal set $F$ given \emph{ipsis literis} by the construction of Subsection~3.1 with
$\nu_i$ replacing $\mu_i$,  $P_{\overline{X}_c}(f,\phi)$ replacing $C$  and $\psi$ replacing $\varphi$.
From the construction (c.f. Lemma~3.8) there is a sequence $(t_k)_{k\ge 1}$ so that
$$
\lim_{k\to\infty} | \frac{1}{t_{2k+i}}  S_{t_{2k+i}} \psi (x) - \int \psi \, d\nu_i | = 0
	\quad \text{for every $x\in F$}
$$
and $P_F(f,\psi) \ge C-8 \gamma$.
In particular $F$ is contained in the irregular set $E_\psi$.
Since $\gamma$ was chosen arbitrary and $F \subset E_\psi$, 
to complete the proof of the corollary it is enough to prove that $F \subset \overline{X}_c$.
This actually follows from item (1) above since for any $x\in F$ 
\begin{align*}
\limsup_{k\to\infty}   \Big| \frac1{t_{2k+2}} S_{t_{2k+2}} \psi(x) -\int \psi \,d\mu_\phi \Big| 
	& \ge \limsup_{k\to\infty} \Bigg[ \Big| \int \psi \,d\nu_2 - \int \psi \,d\mu_\phi \Big|
		  \\
	&  - \Big| \frac1{t_{2k+2}} S_{t_{2k+2}} \psi(x) -\int \psi \,d\mu_\phi \Big| \Bigg]\\
	& \ge c
\end{align*}
This finishes the proof of the corollary.

\subsection{Proof of Theorem~\ref{thm:level-2}}

For the proof of the theorem we will need the following auxiliary lemma that will play the same role
of Lemma~\ref{lemma:aux1} in the proof of Theorem~\ref{thm:exp-add}. It is here that we need the metric
on $\mathcal M_1$ to be translation invariant and affine.

\begin{lemma}\label{lemma:aux2}
Let $c>0$ be given. For any $\delta> 0$ there exists $\vep_\delta>0$ and $N=N_\delta \in \N$ so that
$B(x ,n,  \vep) \subset Y_{\mu,c - \delta, n}$ for all $0< \vep < \vep_\delta$, $n \geq N$ and $x \in Y_{\mu, c, n}$.
\end{lemma}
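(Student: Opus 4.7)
The plan is to mirror the strategy of Lemma~\ref{lemma:aux1} at the level of empirical measures: I want to show that $d(\delta_{x,n}, \delta_{y,n})$ can be made uniformly small whenever $y \in B(x, n, \vep)$ and $\vep$ is chosen small enough, so that the reverse triangle inequality transfers the bound $d(\delta_{x,n}, \mu) \ge c$ into $d(\delta_{y,n}, \mu) \ge c - \delta$.

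First I would exploit assumptions (i) and (ii) on the metric $d$ on $\mathcal{M}_1$, interpreted (as is standard for Fourier-type weak-$^*$ metrics such as the one exhibited in the paper) as extending to the cone of finite positive measures, to establish the telescoping estimate
$$
d(\delta_{x,n}, \delta_{y,n}) \le \frac{1}{n}\sum_{j=0}^{n-1} d\bigl(\delta_{f^j(x)}, \delta_{f^j(y)}\bigr).
$$
Indeed, property (ii) pulls out the factor $1/n$, and then iterated applications of the triangle inequality, each combined with translation invariance (i) to cancel common summands, produce the subadditivity above.

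Second, since $d$ is compatible with the weak-$^*$ topology, the Dirac embedding $z\mapsto \delta_z$ from $M$ into $\mathcal{M}_1$ is continuous, hence uniformly continuous by compactness of $M$. Given $\delta > 0$, I pick $\vep_\delta > 0$ such that $d(\delta_z, \delta_w) < \delta$ whenever $d(z, w) < \vep_\delta$, and set $N_\delta = 1$. For any $x \in Y_{\mu, c, n}$ and $y \in B(x, n, \vep)$ with $0 < \vep < \vep_\delta$, each summand in the subadditivity estimate is strictly less than $\delta$, so $d(\delta_{x,n}, \delta_{y,n}) < \delta$, and the triangle inequality gives
$$
d(\delta_{y,n}, \mu) \ge d(\delta_{x,n}, \mu) - d(\delta_{x,n}, \delta_{y,n}) \ge c - \delta,
$$
that is, $y \in Y_{\mu, c - \delta, n}$.

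The main point requiring care is the justification of the telescoping step: properties (i) and (ii) are formulated on $\mathcal{M}_1$, whereas the natural bookkeeping involves finite positive measures of mass $1/n$ and $(n-1)/n$. I expect this to be harmless, either by appealing directly to the explicit Fourier-type metric mentioned in the paper (for which the subadditivity is immediate from the subadditivity of absolute values), or by rewriting the telescoping in terms of two-point convex combinations of the form $\tfrac{1}{n}\delta_{f^k(x)} + \tfrac{n-1}{n}\tilde{A}_k$ where $\tilde{A}_k \in \mathcal{M}_1$ is the appropriate normalisation.
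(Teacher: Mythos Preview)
Your proposal is correct and follows essentially the same route as the paper: uniform continuity of the Dirac embedding $z\mapsto\delta_z$, the subadditivity estimate $d(\delta_{x,n},\delta_{y,n})\le \frac1n\sum_{j=0}^{n-1} d(\delta_{f^j(x)},\delta_{f^j(y)})$, and then the reverse triangle inequality. The paper's proof simply asserts the subadditivity without comment (and does not actually use any $N$, so your choice $N_\delta=1$ is consistent); you are more explicit in noting that properties (i) and (ii), as literally stated on $\mathcal{M}_1$, already require the metric to extend to the positive cone (since $\eta_1+\eta\notin\mathcal{M}_1$), and your suggested workarounds are both adequate.
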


\begin{proof}
Since $M \in x \mapsto \delta_{x} \in \mathcal{M}_{1}$ is uniformly continuous then given $\delta > 0$ there exists
$\vep_{\delta} > 0$ such that if $d(x, y) < \vep_{\delta}$ we have $d(\delta_{x},\delta_{y}) < \delta$. Hence, if
$x \in Y_{ c, n}$ and $y \in B(x,n,\vep)$ we have:
$
d(\delta_{y,n},\mu)
	 \geq d(\delta_{x,n},\mu) - d(\delta_{x,n},\delta_{y,n})
	 \geq c- \frac{1}{n}\sum_{i=0}^{n-1}d(\delta_{x},\delta_{y})
	 \geq c -\delta,
$
 and thus $y \in Y_{\mu, c,n}$, which proves the lemma.
\end{proof}

We proceed with the proof of the theorem assuming that $\mu=\mu_{f,\phi}$ is the unique equilibrium state for the continuous map $f$ with respect to the continuous potential $\phi$ and also $\overline{Y}_{\mu, c} \neq\emptyset$.
In order to prove that
$
P_{\overline{Y}_{\mu,c} }(f , \phi) \leq P_{\topp}(f , \phi) - \inf_{d(\eta ,\mu) = c}Q(\eta)
$
is strictly smaller than the topological pressure $P_{\topp}(f , \phi) $ we proceed to cover $\overline{Y}_{\mu, c}$
by a properly chosen family of dynamical balls. Fix $\delta>0$ small and
$\alpha> P_{\topp}(f , \Phi) - \inf_{ d(\eta ,\mu) \geq c-\delta}Q(\eta)$.
Given $\vep > 0$ small and $N \in \N$, for any $x \in \overline{Y}_{\mu,c}$ pick $m(x)\geq N$ in such a way that
$x\in Y_{\mu,c-\frac{\delta}{2},m(x)}$ and consider $\mathcal{G}_N:=\{(x,m(x)):x \in \overline{Y}_{\mu,c}\}$.
Hence
$$
\overline{Y}_{\mu,c}
        \subset \ds\bigcup_{(x,n)\in \mathcal{G}_N} B(x,n,\vep)
$$
and also $B(x, n ,\vep) \subset \overline{Y}_{\mu,c-\delta,n}$, for all $x \in Y_{\mu,c-\frac{\delta}{2},n}$ and $n \geq N$ and $\vep$ small (by Lemma~\ref{lemma:aux2}).
Therefore we can proceed as in the proof of Theorem~\ref{thm:exp-add} and extract a subset $\hat{\mathcal{G}}_N
\subset \mathcal{G}_N$ in such a way that if $(x, l)$ and $(y,l)$ belong to
$\hat{\mathcal{G}}_N$ then $B(x,l,\frac{\vep}{2}) \cap B(x,l,\frac{\vep}{2}) = \emptyset$.
If $\zeta= (-P+\alpha + \inf_{ d(\eta ,\mu) \geq c-\delta}Q(\eta))/2>0$,
by the large deviations upper bound, for every closed set $U$ one has $\mu \left(x\in M : \delta_{x,n} \in U \right) \le \exp (-n [\inf_{\eta \in U} Q(\eta)
-\zeta])$ provided that $n\ge N$ is large enough.
This, together with the Gibbs property for
$\nu$, yields that
\begin{align*}
\sum_{(x,n)\in \hat{\mathcal{G}}_N} e^{-\alpha n +S_n\phi(x)}
    &\leq K(\vep)\sum_{n\ge N} \sum_{x\in \hat{\mathcal{G}}_{N,n}} e^{(P-\alpha) n }\nu(B(x,n,\vep)) \\
    & \leq K(\vep) K(\frac{\vep}{2})  \sum_{n\ge N} e^{(P-\alpha) n}\nu\Big( \bigcup_{x\in \hat{\mathcal{G}}_{N,n}}
    		B(x,n,\frac{\vep}{2})\Big) \\
    & \leq K(\vep) K(\frac{\vep}{2})  \sum_{n\ge N} e^{(P-\alpha) n}\nu\big( \overline{Y}_{\mu,c-\delta,n}\big) \\
    & \leq  K(\vep) K(\frac{\vep}{2})  \sum_{n\ge N} \exp n
        \big(
         P-\alpha - \inf_{d(\eta ,\mu) \geq c-\delta}Q(\eta) + \zeta
        \big)\\
    & \leq  K(\vep) K(\frac{\vep}{2})  \sum_{n\ge N} e^{-\zeta n}
\end{align*}
which is finite and independent of $N$. This proves that
$
P_{\overline{Y}_{\mu,c}}(f,\phi)
    \leq P_{\topp}(f , \phi) - \inf_{ d(\eta ,\mu) \geq c-\delta}Q(\eta).
$
Since $Q$ is lower semicontinuous it follows that
$$
P_{\overline{Y}_{\mu,c}}(f,\phi)
    \leq P_{\topp}(f , \phi) - \inf_{ d(\eta ,\mu) \geq c}Q(\eta).
$$

For the proof of the second part of the theorem we make use of the level-2 large deviations principles obtained
by Zhou and Chen~\cite{ZC13} under the assumptions of almost product structure
and the uniform separation properties.
Let $0 < c_1 < c_{2}$ be so that $\overline{Y}_{\mu,c_1}\neq\emptyset$. Using \cite{ZC13}, given a compact connected subset in $\mathcal{M}_{1}$ then the topological pressure of the set
$Y(C) := \{x \in M : \lim_{n \to +\infty}\delta_{x,n} \in C \}$
coincides with
$
\inf \{h_{\eta}(f) + \int \psi \,d\eta : \eta\; \mbox{is} \;f\mbox{-invariant}\;\text{and}\; \eta \in C\}.
$
On other hand, by \cite{CTY13}
$Q(\eta) = P_{\topp}(f,\phi) - h_{\eta}(f) - \int\phi d\eta$. Thus using that the metric entropy is linear convex and
the choice of the metric on $\mathcal{M}_{1}$ we have
\begin{align*}
P_{\topp}(f , \phi) - \inf_{d(\eta,\mu) = c_{1}}Q(\eta)
	& = P_{Y(\partial B(\mu,c_{1}))} (f,\phi)
      \leq P_{Y(B(\mu,c_{1},c_{2}))}(f,\phi)\\
    & \leq  P_{Y\big(\overline{B(\mu,c_{1}, c_{2})}\big)}(f,\phi)
         \leq P_{Y(B(\mu,c_{1}))} (f,\phi)    \\
    & \leq  P_{Y(\overline{B(\mu,c_{1})})} (f,\phi)
     \leq P_{\overline{Y}_{\mu,c_1}}(f,\phi) \\
    & \leq P_{\underline{Y}_{\mu,c_1}} (f,\phi)
     \leq P_{\topp}(f , \phi) - \inf_{d(\eta ,\mu) \geq c_{1}}Q(\eta) \\
    & \leq P_{\topp}(f , \phi) - \inf_{d(\eta ,\mu) = c_{1}}Q(\eta),
\end{align*}
proving all quantities coincide. This finishes the proof of the theorem.

\subsection{Proof of Theorem~\ref{thm:nue-add}}

This section is devoted to the proof of Theorem~\ref{thm:nue-add} that claims that if the set
$\underline{X}_{c}$ is non-empty then it has smaller topological pressure. The strategy for the
proof is similar to the one of Theorem~\ref{thm:exp-add} with the difficulty that in the non-uniformly
expanding setting the Gibbs property holds at a sequence of moments that does depend on the point.
For that reason we shall give a sketch of the proof with the main ingredients.
Since $\mu$ is a weak Gibbs measure then there exists $\vep_0>0$ so that the following property
holds: for every $0<\vep<\vep_0$ there exists $K(\vep)>0$ and  for $\mu$-almost every $x$ there exists
a sequence $n_k(x)\to \infty$ such that
$$
K(\vep)^{-1} \leq \frac{\mu(B(x,n_{k}(x),\vep))}{e^{-n_{k}(x)P + S_{n_{k}(x)}\phi(x)}} \leq K(\vep).
$$
Assume the weak Gibbs property holds for \emph{all} points in the invariant set $\Lambda=H$
and in what follows consider $\underline{X_I}:=\underline{X_I}\cap \Lambda$.

Let $\delta>0$ be arbitrary. We proceed to prove that  $P_{\underline{X}_{I}}(f , \phi)
\leq P_{\topp}(f,\phi) - L_{I_{\delta}}$ is strictly smaller than the topological pressure.
Consider $\alpha > P_{\topp}(f,\Phi) - L_{I_{\delta}} $ be given and take $\vep > 0$
arbitrarily small  and $N \in \N$ arbitrarily large in what follows. One can write
$$
\underline{X}_{I}\subset \bigcup_{\ell\ge 1} \bigcap_{j\ge \ell} X_{I_{\delta},j}.
$$
where as before $X_{I,n}=\{x \in M : \frac{1}{n}S_{n}\psi(x) \in I\}$.
It is not hard to check that for any $x \in \underline{X}_{I}$ there exists a sequence of positive
integers $(m_j(x))_{j \in \N}$ converging to infinite so that $x \in X_{I_{\delta},m_j(x)}$ and
$m_j(x)$ is a moment at which the Giibs property holds.
Therefore, one can pick   $m(x)\geq N$ in such a way that
$x\in X_{I_{\delta},m(x)-1}$ and consider $\mathcal{G}:=\{(x,m(x)):x \in \overline{X}_{I}\}$.
Now the proof proceeds with the estimates used in the proof of Theorem~\ref{thm:exp-add}.

\begin{remark}
In the previous proof we did not require the times at which the Gibbs property hold to have positive density
at infinity as in usual notions of non-lacunary Gibbs measures. In particular, this gives a wider range of
applications.
\end{remark}

\begin{remark}
Actually let us mention that we could not estimate the topological pressure of the larger
set $\overline{X}_{c}$. In fact, for that purpose we would need to guarantee that for each
point there would exist a sequence of instants at which simultaneously the Gibbs property and
the time averages being far from the time average occurs.
Nevertheless this can be verified in examples.
\end{remark}

\section{Examples and applications}\label{Examples}

\subsection{Hyperbolic diffeomorphisms and flows}

Using the fact that hyperbolic sets admit Markov partitions (c.f. \cite{Bow75}) then same results as
in Theorem~\ref{thm:exp-add} also hold in this hyperbolic setting via semi conjugation to bilateral shifts
and Remark~\ref{rmk:hyperbolic}. 
Axiom A flows $(Y_t)_t$ are also semi-conjugate to suspension flows over subshifts of finite type, by \cite{BR75}. Recall that given a subshift of finite type $\sigma \colon \Sigma\rightarrow{\Sigma}$ and a ceiling function $h\colon \Sigma\rightarrow{\mathbb{R}^{+}}$ bounded away from zero
and infinity the associated \emph{suspension flow} $(S_t)_t$ is defined in
$
\Sigma_h=\{(x,t) \in \Sigma\times{\mathbb{R}_+}: 0 \leq t \leq h(x) \}
$
with the identification between the pairs $(x,h(x))$ and $(\sigma(x),0)$. The semiflow defined
on $\Sigma_h$ by $S_t(x,r)=(\sigma^{n}(x),r+t-\sum_{i=0}^{n-1} h(\sigma^{i}(x)))$,
where $n=n(x,r+t) \in{\mathbb N}_0$ is uniquely defined by 
$\sum_{i=0}^{n-1}h(\sigma^{i}(x))\leq{r+t}<\sum_{i=0}^{n}h(\sigma^{i}(x))$.
It is clear that 
$
\frac1{n(x,T+s)} \sum_{i=0}^{n(x,T+s)-1}h(\sigma^{i}(x))
	\leq \frac{T +s}{n(x,T+s)}
	< \frac1{n(x,T+s)} \sum_{i=0}^{n(x,T+s)}h(\sigma^{i}(x))
$
and $n(x,T+s) \to \infty$ as $T\to\infty$.
Given $\psi \in C(\Sigma_{h}, \mathbb R)$ define $\bar \psi \in C(\Sigma,\mathbb R)$ by $\bar\psi(x)=\int_0^{h(x)} \psi(x,t) \;dt$.
Let $\mu$ and $\mu_\Sigma$ be the unique equilibrium states for $(X_t)_t$ with respect to $\phi$ and 
for $\sigma$ with respect to $\overline{\phi}$, respectively, which are known to satisfy 
$\mu=\mu_\Sigma\times \text{Leb}_1 / \int h d\mu_\Sigma$ (c.f. \cite{BR75}).
If $\beta_{1}, \beta_{2}, \beta_{3} >0$ are small 
it is not hard to check that
\begin{align*}
\overline{X}_c
	=\Big\{ (x,s) \in \Sigma_h  : \limsup_{T\to \infty} \big| \frac1T \int_0^T \psi(S_t(x,s)) \;dt - \int \psi \;d\mu \big| \geq c \Big\}
\end{align*}
is contained in the union of the $(S_t)_t$-invariant sets
\begin{align*}
\overline{X}_{c,h}
	& =\big\{ (x,s) \in \Sigma_h : \limsup_{T\to\infty} \big| \frac{n(x,T+s)}{T+s} -\frac{1}{\int h \;d\mu_\Sigma} \big| \geq \beta_{2} \big \} \\
	& \subseteq \big\{ (x,s) \in \Sigma_h :
		\limsup_{T\to\infty} \big|  \frac1{n(x,T+s)} \sum_{i=0}^{n(x,T+s)}h(\sigma^{i}(x)) -\int h \;d\mu_\Sigma \big| \geq
			\beta_{3}\big \}
\end{align*}
and
\begin{align*}
\overline{X}_{c,\overline\psi}
	& =\big\{ (x,s) \in \Sigma_h :  \limsup_{T\to\infty}
	\big|
	\frac1{n(x,T)} \sum_{i=0}^{n(x,T+s)} \overline \psi (\sigma^{i}(x)) -\int \overline \psi \; d\mu_\Sigma\big|
		\geq \beta_{1}
	 \big\}
\end{align*}
Using
$
P_{\overline{X}_c} ((S_t)_t,\phi)
	\leq \max\{ P_{\overline{X}_{c,h}} ((S_t)_t,\psi), P_{\overline{X}_{c,\overline\phi}}   ((S_t)_t,\phi) \}
$
and Theorem~\ref{thm:exp-add} we deduce by semi-conjugacy that $P_{\overline{X}_{c} }((X_t)_t , \phi)<\Ptop ((X_t)_t,\phi)$
for any
Axiom A flow, any H\"older potential $\phi:M \to \mathbb R$,
any continuous $\psi: M\to \mathbb R$ and  $c>0$.

\subsection{Maneville-Pommeau maps}

If $\al \in (0,1)$, let $f_\alpha:\mathbb S^{1}\to \mathbb  S^{1}$ be the local
homeomorphism given by $f_\al(x)=x(1+2^{\alpha} x^{\alpha})$ for $0 \leq x \leq \frac{1}{2}$
and by $f_\al(x)=2x-1$ whenever $\frac{1}{2} < x \leq 1$.
This map satisfies the specification property since it is topological conjugate to the double expanding map.
Pollicott and Weiss~\cite{PW99} established a multifractal formalism for the Lyapunov spectrum associated to
this class of transformations and proved precise formulas for the dimension of the level sets of points with same
Lyapunov exponent. Clearly
$
\frac{1}n \log |(f_\al^n)'(x)|
    =\frac1n \sum_{j=0}^{n-1} \psi( f_\al^j(x))
$
with $\psi(x)=\log |(f_\al)'(x)|$.
For every $t\in (-\infty, 1)$ there exists a unique equilibrium state $\mu_t$ with respect
to the H\"older continuous potential $\phi_t=-t\log |(f_\al)'(x)|$ and it is well known that
there are two equilibrium states for $f_\al$ with respect to $-\log |(f_\al)'(x)|$ namely
an acip $\mu_1$ and the Dirac measure $\delta_0$. 
Moreover, for every $t \le 1$ the equilibrium state $\mu_t $ for $f$
with respect to the potential $\phi_t$ satisfies a weak Gibbs property: there are constants $K_n$ so that
$\limsup_{n\to\infty}\frac1n\log K_n=0$ and
$$
{K_n}^{-1} e^{-n P_t}|(f^n)'(x)|^t
    \leq \mu_t(\cP^{(n)}(x))
    \leq K_n e^{-n P_t}|(f^n)'(x)|^t
$$
for all $x\in [0,1]$ and $n\ge 1$, where $\cP$ is the Markov
partition for $f$,  $\cP^{(n)}(x)$ is the element of the partition $\cP^{(n)}=\bigvee_{j=0}^{n-1} f^{-j}\cP$ that
contains $x$ and $P_t=\Ptop(f,\phi_t)$. By the Ruelle inequality all measures $\mu_t$ are expanding.
Our results do not apply for $\mu_1$ since polynomial upper and lower bounds for H\"older continuous
observables have been established in~\cite{MN08,Mel09, PS09}.
If $|t|$ is small, $c>0$ and $\psi$ is  \emph{continuous} then
\begin{align*}
 \limsup_{n\to \infty} \frac1n & \log \mu_t \left(x \in M : \left|\frac1n S_n \psi(x)-\!\int \psi\,d\mu_t \right|
    \ge c\right) <0
\end{align*}
(c.f. \cite{Va12}). If in addition we assume $\psi$ is H\"older continuous then there exists an interval $J\subset \mathbb R$ such that the following
local large-deviations principle holds
$$
\limsup_{n\to\infty} \frac1n \log \mu_{t}
    \left(x\in M : \frac1n S_n\psi(x) \in [a,b] \right)
    \le-\inf_{s\in[a,b]} I_{f,\phi_t,\psi}(s)
$$
and
$$
\liminf_{n\to\infty} \frac1n \log \mu_{t}
    \left(x\in M : \frac1n S_n\psi(x) \in (a,b) \right)
    \ge-\inf_{s\in (a,b)} I_{f,\phi_t,\psi}(s)
$$
every $[a,b]\subset J$, and continuity and smoothness of the rate function are also obtained  (c.f.\cite{Bomfim}).
As a consequence we deduce from Theorem~\ref{thm:nue-add} that for all $c>0$ satisfying
$[\int \psi d\mu_{t} - c, \int \psi d\mu_t + c]\subset J$ either $\underline{X}_{c} = \emptyset$ or else
\begin{align*}
P_{\overline{X}_c}(f,\phi_t)
    & = P_{\underline{X}_{c}} (f,\phi_t) = P_{X(c)} (f,\phi_t) = P_{X([c_{1},c_{2}])}(f,\phi_t)  \\
    & = P_{X(c_{1},c_{2})}(f,\phi_t)= P_{\topp}(f , \phi_t) - I_{f,\phi_t,\psi}(c).
\end{align*}
for $c=\max\{|c_{1}|,|c_{2}|\}$.
Furthermore, the right hand side expression varies continuously with $c$ and also varies continuously
with $f$, $\phi_t$ and $\psi$. Although the set of irregular points has full topological entropy
$\log 2$ (see e.g. \cite{Daniel})
the set of Lyapunov irregular points whose Birkhoff averages remain far from $\lambda(\mu_0):=\int \psi d\mu_0$
for all large iterates has topological entropy strictly smaller than $\log 2$.

\subsection{Multimodal maps}

Our results also apply to a broad class of transitive multimodal interval maps $f$ with finitely many non-degenerate critical points with negative Schwarzian derivative considered in \cite{BT09}.  If there exists $C>0$ and $\beta>2\ell -1$ so that $|Df^n(c)| \ge n^\beta$ for every critical point $c$ and all $n\ge 1$ (where $\ell$ denotes the maximal order of the critical points) then it follows from \cite[Theorem~1]{BT09} that  there exists $t_1<1$ so that for all $t\in (t_1,1)$:
(i) there exists a unique equilibrium state $\mu_t$ for $f$ with respect to the potential $\varphi_t=-t\log|Df|$; 
(ii) $\mu_t$ has a compatible inducing scheme with exponential tails, hence it has exponential decay of correlations; and (iii) $\mu_t$ has positive Lyapunov exponent almost everywhere.
Moreover, there is a conformal probability measure $\nu_t$ so that $J_{\nu_t} f(x)=e^{P(t)}|f'(x)|^{-t}$
almost everywhere
and $\mu_t\ll \nu_t$, where $P(t)=\Ptop(f,-t\log|f'|)$. In addition, since $\mu_t$ has only positive Lyapunov exponents then almost every point has infinitely many hyperbolic times. If $n$ is a hyperbolic time for $x$ then the Jacobian
$J_{\nu_t} f^n$ has bounded distortion and, consequently, $\nu_t$ satisfies the weak Gibbs property. Moreover
by property (ii) above with the results by \cite{ALFV} 
one has that $\mu_t$ has exponential large deviations and so it satisfies the assumptions of
Theorem~\ref{thm:nue-add}.

\subsection{Higher dimensional non-uniformly expanding maps}

Assume that  $M$ is a compact metric space where the Besicovitch covering lemma
and let  $f:M \to M$ be a local homeomorphism so that: there exists a bounded function
$x\mapsto L(x)$ such that, for every $x\in M$ there is a
neighborhood $U_x$ of $x$ so that $f_x : U_x \to f(U_x)$ is
invertible and
$
d(f_x^{-1}(y),f_x^{-1}(z))
    \leq L(x) \;d(y,z)
$
for every  $y,z\in f(U_x)$.
Assume also that every point has finitely many preimages and that
the level sets for the degree $\{x : \#\{f^{-1}(x)\}=k\}$ are
closed. Given $x\in M$ set $\deg_x(f)=\# f^{-1}(x)$ and 
define
$
h(f)=\liminf_{n\to\infty} \frac1n \log [\min_{x\in M} \deg_x(f^n)].
$
Assume that every point in $M$ has at least $e^{h(f)}$ preimages by
$f$, that $f$ is uniformly expanding outside $\cA$
and not too contracting inside $\cA$ (see \cite{Varandas2} for precise statements).
A concrete example can be build on the torus by taking a linear expanding map $f_0:\torus^d\to\torus^d$,
fixing a small open covering $\cP$ of $\torus^d$ and by deforming $f_0$ on a small neighborhood of a fixed
point $p$ inside $P_1 \in \mathcal P$ by a pitchfork bifurcation in such a way that $p$
becomes a saddle for the perturbed local homeomorphism $f$ (c.f. \cite{Varandas2}).
Then \cite{Va12,Bomfim} imply that local large deviation estimates hold for
all equilibrium states associated to H\"older continuous potentials with low variation
and Theorem~\ref{thm:nue-add} holds in this context.

\subsection{Bowen-eye like systems and a counter-example}

\subsubsection{Distinction of $\overline{X}_c$ and $\underline{X}_c$}

We shall present a simple example of a discrete dynamical system $f$, potential $\phi$, observable
$\psi$ and constant $c>0$ so that $\overline{X}_c \neq \underline{X}_c$.
The map $f$ corresponds
to the time-one map of a flow known as the Bowen eye. The map $f$ has three fixed points
$p_1, p_2$ and $p_3$ (labeled from the left in Figure~\ref{Figure3} below) and is such that
$\{p_1, p_2, p_3\} = \text{Per}(f)=R(f)$ while the non-wandering set is formed by
the fixed point $p_2$ and the closure $D$ of the two separatrices corresponding to the singularities $p_1$ and $p_3$ of the original vector field.
%
\begin{figure}[h]
\includegraphics[scale=.4]{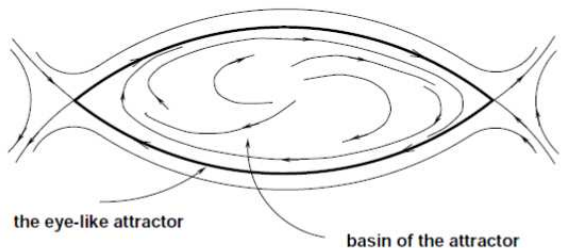}
\vspace{-.3cm}
\caption{Bowen eye attractor}
\label{Figure3}
\end{figure}
%
Moreover, it is well known that for \emph{every} $x$ in inner region of the plane determined by $D$ (except $p_2$)
the empirical  measures $\frac{1}n \sum_{j=0}^{n-1} \delta_{f^j(x)}$ have the Dirac measures $\delta_{p_1}$
and $\delta_{p_3}$ as accumulation points. If $\phi: \mathbb R^2 \to \mathbb R$ denotes the projection on the $x$-coordinate
then, by the variational principle,
$
\Ptop(f,\phi) = \sup \{h_\mu(f) +\int \phi \, d\mu\} 
=\phi(p_3)
$
and $\delta_{p_3}$ is the unique equilibrium state for $\phi$.
On the other hand, for $0<c<d(p_2,p_3)$ it is clear that
$
\underline X_c =W^s(p_1) \cup \{ p_2\}
$
and
$
\overline X_c =D\setminus (W^s(p_3) \cup \{p_2\}).	
$
However, in this case one has $P_{\underline X_c} (f,\phi) = P_{\overline X_c} (f,\phi)$.

\subsubsection{A counter-example}

Despite the fact that the topological pressure of both sets $\underline X_c$ and $\overline X_c$ do
coincide, the previous Bowen-eye construction gives some light on how to construct an example
where  $\overline{CP}_{\underline X_c} (f,\phi) < \overline{CP}_{\overline X_c} (f,\phi)$, where
$\overline{CP}_\Lambda$ denotes the upper Carath\'eodory capacity of the set $\Lambda$
(see e.g. \cite[Section~11]{pesin}). The following can be realized
as a non-compact invariant set of a horseshoe.
Let $\sigma: \Sigma_A \to \Sigma_A$, with $\Sigma_A \subset \{0,1,2,3\}^{\mathbb Z}$, be the subshift of finite type associated to the transition matrix
$$
A=
\left(
\begin{array}{cccc}
 1 & 1 & 1 & 0 \\
 1  & 1 & 1 & 0 \\
 1 & 1 & 1 & 0 \\
 0 & 0 & 0 & 1
\end{array}
\right).
$$
Now, consider the $\sigma$-invariant subset $\Sigma \subset \Sigma_A$ that contains the four fixed points for
the shift $\sigma$ and (corresponding to the constant sequences) and be such that any $x=(x_n)_n \in \Sigma
\setminus \{\underline{3}\}$ it holds that
$\limsup_{n\to\infty} \frac1{2n} \# \big\{|j| \le n : x_j \in \{1,2\} \big\} =1$
 and $\liminf_{n\to\infty} \frac1{2n} \# \big\{-n \le j \le n : x_j \in \{1,2\} \big\} =0$.

Let $\phi$ be a continuous potential so that the unique equilibrium state is $\mu_\phi=\delta_{\underline{0}}$
(such a potential can be build non-negative following the ideas of Hofbauer \cite[Page 226, 239]{Hof77})
and consider the continuous observable $\psi=\chi_{[0]}$. Notice that $\int \psi\,d\mu_\phi=1$ and
for $c>0$ small enough we get that $\underline{X}_c=\{\underline{3}\}$ while
$
\overline{X}_c=\Sigma \setminus \{\underline{0} \} .
$
Since $\phi\mid_{\underline{X}_c}\equiv 0$ and $\underline{X}_c=\{\underline{3}\}$ then 
$\overline{CP}_{\underline{X}_c}(f,\phi)=h_{\underline{X}_c}(f)=0$. On the other
hand, since $\phi$ is non-negative then $\overline{CP}_{\overline{X}_c}(f,\phi)\ge 
\overline{CP}_{\overline{X}_c}(f,0)$ which we now claim to be strictly positive.
In fact if $0<\alpha<\log 2$ we will prove that $m_\alpha(f,\overline{X}_c)=+\infty$ and deduce that
$\overline{CP}_{\overline{X}_c}(f,0)>0$.
Recall that
$
m_\alpha(f,\overline{X}_c)=\lim_{\diam(\mathcal U) \to 0} m_\alpha(f,\overline{X}_c, \mathcal U)
$
where $m_\alpha(f,\overline{X}_c, \mathcal U)= \lim_{N\to\infty} m_\alpha(f,\overline{X}_c, \mathcal U, N)$,
and
$$
m_\alpha(f, \overline{X}_c, \mathcal U, N)
	= \inf \Big\{ \sum_{U \in \mathcal G_N} e^{-\al N}  :
		\mathcal G_N \text{ is subcover of } \vee_{0\le j \le N} \sigma^{-j} {\mathcal U} \Big\}.
$$
Let $\vep>0$ be small and fixed (to be made precise later and depending only on $\al$).
For any $\ell\ge 1$, let us consider an open cover $\mathcal U_\ell$ of $\overline{X}_c$  formed by cylinders as follows:
a $(2n+1)$-cylinder $U=[x_{-n}, \dots, x_n]$ belongs to $\mathcal U_\ell$ if and only if $n\ge \ell$ is the smallest positive integer
such that
\begin{equation}\label{eq10}
\# \big\{|j| \le n : x_j \in \{1,2\} \big\}  \ge (2n+1)(1-\vep).
\end{equation}
In fact, given $x=(x_j)_j \in \underline{X}_c$ and defining $n(x)\ge \ell$ to be the first instant such that equation~\eqref{eq10} holds it follows that $[x_{-n}, \dots, x_n]$ belongs to $\mathcal U_\ell$ and so $\mathcal U_\ell$ covers $\underline{X}_c$.
Moreover, by construction, the elements of $\mathcal U_\ell$ are all disjoint, every such element contains at least one point of $\underline{X}_c$ and the diameter of $\mathcal U_\ell$ goes to zero as $\ell\to\infty$.
Thus $m_\alpha(f,\overline{X}_c)=\lim_{\ell \to\infty} m_\alpha(f,\overline{X}_c, \mathcal U_\ell)$.
Observe also that $\# \mathcal \, U_\ell \ge 2^{(2\ell+1) (1-\vep)}$ which correspond to the number of disjoint cylinders of length $(2\ell+1)$ satisfying \eqref{eq10}. Therefore, given 
any $N\gg1$ and  
any subcover $\mathcal G_{N,\ell}$ of the space of cylinders $\vee_{0 \le j \le N} \sigma^{-j} {\mathcal U_\ell}$
that covers $\overline{X}_c$ coincides with the space of all $(N+\ell)$-cylinders. If one writes
$N+\ell= (2\ell+1) s +r$ with $s\ge 1$ and $0\le r \le 2\ell$ then there are at least $2^{(N+\ell-r)(1-\vep)}$ such cylinders 
(just by considering  $N$-concatenations of $(2\ell+1)$-cylinders that satisfy equation~\eqref{eq10}).
Thus, if $\vep>0$ is chosen small then it follows that 
\begin{align*}
m_\alpha(f,\overline{X}_c, \mathcal U_\ell) 
	 \ge \limsup_{N\to\infty} \sum_{U \in \mathcal G_{N,\ell}} e^{-\alpha N} 
	 \ge  \limsup_{N\to\infty} e^{-\alpha N} \; 2^{(N-\ell)(1-\vep)}
	= +\infty.
\end{align*}
Hence $\overline{CP}_{\overline{X}_c}(f,0)\ge \log 2>0$  which proves our claim.
Let us mention it is still a question to construct 
an example where $P_{\underline X_c} (f,\phi) < P_{\overline X_c} (f,\phi)$. 

\subsection{Discontinuity and non-strict monotonicity of the pressure function}

\subsubsection{Porcupine-like horseshoes}

To present an example where there is discontinuity and non-strict monotonicity of the pressure function
$
c\mapsto P_{\overline{X}_c} (f,\phi)
$
we use the class of local diffeomorphisms $f$ studied by D\'iaz, Gelfert and Rams that exhibit porcupine-like horseshoes.
In fact,  it follows from the analysis of the Lyapunov spectrum in the central direction (see \cite[Remark~5.4]{DG12} and
\cite{DGR11})  that there are constants $\lambda < 0< \tilde\beta <\beta$ so that
$$
\liminf_{n\to\infty} \frac1n \log \|Df^n\mid_{E^c}(x)\| \,, \,
\limsup_{n\to\infty} \frac1n \log \|Df^n\mid_{E^c}(x)\| \in [\log \lambda, \log \tilde \beta] \cup\{\log\beta\}
$$
and also that there exists a unique point $Q$ (indeed it is a fixed point by $f$) so that the central Lyapunov exponent
is $\log\beta>0$.
Let us consider the H\"older continuous potential $\phi_t=-t\log \|Df\mid_{E^c}\|$ for a large value of negative $t$
and the observable $\psi=\log \|Df\mid_{E^c}\|$.
It follows from \cite[Proposition~5.6]{DG12} that for all $t\ll 0$ the Dirac measure $\delta_Q$ is the unique
equilibrium state for $f$ with respect to $\phi_t$ and consequently
$
\Ptop(f,\phi_t)  = -t \log \|Df(Q)\mid_{E^c}\| = -t \log \beta.
$
\vspace{-.5cm}
\begin{figure}[h]
\includegraphics[scale=.4]{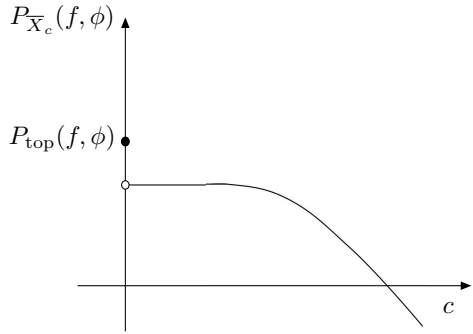}
\vspace{-.7cm}
\caption{Discontinuity of the pressure function}\label{Figure2}
\end{figure}
%
On the other hand if $c\neq \log\beta$ then, taking all invariant measures with central Lyapunov exponent equal to $c$, it follows that
$\sup\Big\{  h_\eta(f) + \int -t \log \|Df(x)\mid_{E^c}\| \,d\eta  : \lambda(\eta)=c \Big\}
     \leq h_{\text{top}}(f) -t\log \tilde \beta$
is strictly smaller than $\Ptop(f,\phi_t)$. This shows the discontinuity of the pressure function
$
c\mapsto P_{\overline{X}_c} (f,\phi_t)
$
where $\overline{X}_c$ is associated to the observable $\psi$.
Actually the same argument leads to prove that
\begin{align*}
\sup\Big\{  h_\eta(f) & + \int -t \log \|Df(x)\mid_{E^c}\| \,d\eta  : \lambda(\eta)\in [\log\lambda, \log \tilde \beta] \Big\} \\
            & = \sup\Big\{  h_\eta(f)  + \int -t \log \|Df(x)\mid_{E^c}\| \,d\eta  :
                    \lambda(\eta)\in [\log\lambda, \log \beta ) \Big\} \\
    & < \Ptop(f,\phi_t)
\end{align*}
and so there exists an interval of constancy for this pressure function.

\subsection*{Acknowledgements.}
This work was partially suppported by CNPq and is
part of the first author's PhD thesis at Federal University of Bahia. The authors are deeply
grateful to V. Climenhaga, D. Kwietniak and M. Todd for useful coments and to the anonymous
referee for many suggestions that helped to improve the manuscript.

\bibliographystyle{alpha}

\end{document}